\documentclass[
 hidempi, 
]{mpi2015-cscpreprint}

\usepackage{amsfonts,amscd}
\usepackage{color}
\usepackage{rotating,wrapfig}
\usepackage{graphics,multicol,multirow,booktabs} % for pdf, bitmapped graphics files
\usepackage{epsfig,xcolor} % for postscript graphics files
\usepackage{times} % assumes new font selection scheme installed
\usepackage{amsmath} % assumes amsmath package installed
\usepackage{amssymb}  % assumes amsmath package installed
\usepackage{cancel} 
\usepackage{here} 
\usepackage{tikz,tikz-3dplot}
\usetikzlibrary{arrows,decorations.pathmorphing,backgrounds,fit,positioning,shapes.symbols,chains,mindmap,trees,calc}
\usepackage{listings}
\definecolor{codegreen}{rgb}{0,0.6,0}
\definecolor{codegray}{rgb}{0.5,0.5,0.5}
\definecolor{codepurple}{rgb}{0.58,0,0.82}
\definecolor{backcolour}{rgb}{0.95,0.95,0.92}
\lstdefinestyle{mystyle}{
    commentstyle=\color{codegreen},
    keywordstyle=\color{magenta},
    stringstyle=\color{blue},
    basicstyle=\ttfamily\footnotesize,
    breakatwhitespace=false,         
    breaklines=true,
    basicstyle=\footnotesize,
    numbers=left,
    stepnumber=1
}
\title{Variable decoupling and the \\ Kolmogorov Superposition Theorem \\ for rational functions}

\author[$\ast$]{A. C. Antoulas}
\affil[$\ast$]{Department of Electrical and Computer Engineering, Rice University, Houston, TX, USA.\authorcr
	\email{aca@rice.edu}}

\author[$\dag$]{I. V. Gosea}
\affil[$\dag$]{Max Planck Institute for Dynamics of Complex Technical Systems, CSC Group \\

	Sandtorstr. 1, 39106 Magdeburg, Germany.\authorcr
	\email{gosea@mpi-magdeburg.mpg.de}, \orcid{0000-0003-3580-4116}}

\author[$\S$]{C. Poussot-Vassal}
\affil[$\S$]{DTIS, ONERA, Universit\'e de Toulouse, 31000, Toulouse, France.\authorcr
	\email{charles.poussot-vassal@onera.fr}, \orcid{0000-0001-9106-1893}}

\abstract{This work shows that for rational multivariate functions, the Kolmogorov Superposition Theorem (KST)  involves several single-variable functions, which can be written down by inspection. In other words,  no computation is required for decoupling the variables of multivariate rational functions. The key tool for this development is the Loewner Framework for multivariate functions. Applications of this result involve approximating multivariate non-rational functions by low-complexity multivariate rational and polynomial functions.}

\keywords{Loewner Framework, Kolmogorov Superposition Theorem, Rational Interpolation, Barycentric Form, Variable Decoupling.}

\novelty{We demonstrate how variable decoupling for rational multivariate functions can be achieved. This result complements the Kolmogorov Superposition Theorem, which addresses the issue for arbitrary continuous functions. We show that this decoupling can be achieved by means of simple function evaluations.}

\definecolor{blue}{RGB}{16,97,169} % bleu onera
\definecolor{grey}{RGB}{88, 102, 110} % gris onera 
\definecolor{green}{RGB}{65,209,204} % turquoise
\definecolor{orange}{RGB}{224, 131, 0} % corail
\definecolor{pink}{RGB}{255,105,180} % rose
\definecolor{black}{RGB}{0, 0, 0} % corail

\newfont{\Bb}{msbm10 scaled\magstep1}

\def\IC{{\mathbb C}}

\def\IH{{\mathbb H}}

\def\IL{{\mathbb L}}

\def\sq{ { { \hfill\color{blue}\rule{1.4mm}{1.4mm} } } }

\newtheorem{theorem}{Theorem}[section]
\newtheorem{lemma}{Lemma}[section]

\newtheorem{proposition}{Proposition}[section]
\newtheorem{corollary}{Corollary}[section]
\newtheorem{remark}{Remark}[section]

\newtheorem{proof}{Proof}[section]

\newcommand{\bB}{{\mathbf B}}

\newcommand{\bH}{{\mathbf H}}
\newcommand{\bI}{{\mathbf I}}

\newcommand{\bL}{{\mathbf L}}

\newcommand{\bS}{{\mathbf S}}
\newcommand{\bT}{{\mathbf T}}

\newcommand{\bV}{{\mathbf V}}

\newcommand{\bX}{{\mathbf X}}

\newcommand{\bZ}{{\mathbf Z}}

\newcommand{\bc}{{\mathbf c}}
\newcommand{\bd}{{\mathbf d}}

\newcommand{\bff}{{\mathbf f}}
\newcommand{\bg}{{\mathbf g}}

\newcommand{\bl}{{\mathbf l}}

\newcommand{\bn}{{\mathbf n}}
\newcommand{\bp}{{\mathbf p}}
\newcommand{\bq}{{\mathbf q}}
\newcommand{\br}{{\mathbf r}}
\newcommand{\bs}{{\mathbf s}}
\newcommand{\bt}{{\mathbf t}}

\newcommand{\bw}{{\mathbf w}}
\newcommand{\bx}{{\mathbf x}}
\newcommand{\by}{{\mathbf y}}
\newcommand{\bz}{{\mathbf z}}

\def\IC{{\mathbb C}}

\def\IL{{\mathbb L}}

\newcommand{\bTheta}{{\boldsymbol{\Theta}}}

\newcommand{\bDelta}{{\boldsymbol{\Delta}}}
\newcommand{\bPhi}{{\boldsymbol{\Phi}}}
\newcommand{\bPsi}{{\boldsymbol{\Psi}}}
\newcommand{\bOmega}{{\boldsymbol{\Omega}}}
\newcommand{\bomega}{{\boldsymbol{\omega}}}
\newcommand{\bphi}{{\boldsymbol{\phi}}}

\newcommand{\bpi}{{\boldsymbol{\pi}}}

\newcommand{\bpsi}{{\boldsymbol{\psi}}}

\newcommand{\bell}{{\boldsymbol{\ell}}}
\newcommand{\bgamma}{{\boldsymbol{\gamma}}}

\newcommand{\numrm}{{\textrm{num}}}
\newcommand{\denrm}{{\textrm{den}}}

\providecommand{\col}[1]{{{#1}}}

\newcommand{\sysord}[0]{{\nu}}
\newcommand{\sysordi}[1]{{\sysord_{#1}}}
\newcommand{\ord}[0]{{n}}

\newcommand{\pare}[1]{\left( #1\right)}
\newcommand{\vargen}[2]{{#1}_{#2}}
\newcommand{\var}[1]{\vargen{x}{#1}}

\newcommand{\lani}[2]{\col{\vargen{\lambda}{#1}({{#2}})}}
\newcommand{\ladim}[0]{{k}}
\newcommand{\ladimi}[1]{{\ladim_{#1}}}

\newcommand{\mudim}[0]{{q}}

\newcommand{\poly}[0]{\bp}
\newcommand{\num}[0]{\bn}
\newcommand{\den}[0]{\bd}
\newcommand{\lag}[3]{\bl_{#3}^{#2}(#1)}
\newcommand{\lagn}[3]{\bell_{#3}^{#2}(#1)}
\newcommand{\barynum}[1]{{\textbf{Bary}}^{\textrm{num}}_{#1}}
\newcommand{\baryden}[1]{{\textbf{Bary}}^{\textrm{den}}_{#1}}

\newcommand{\vars}[0]{s}
\newcommand{\varS}[0]{\bs}
\newcommand{\varsi}[1]{{\vars_{#1}}}
\newcommand{\vart}[0]{t}
\newcommand{\varT}[0]{\bt}
\newcommand{\varti}[1]{{\vart_{#1}}}
\newcommand{\varz}[0]{z}
\newcommand{\varZ}[0]{\bz}
\newcommand{\varzi}[1]{{\varz_{#1}}}
\newcommand{\varx}[0]{x}
\newcommand{\varX}[0]{\bx}
\newcommand{\varxi}[1]{{\varx_{#1}}}
\newcommand{\vary}[0]{y}
\newcommand{\varY}[0]{\by}
\newcommand{\varyi}[1]{{\vary_{#1}}}

\begin{document}

\maketitle

%%%%%%%%%%%%%%%%%%%%%%%%%%%%%%%%%

\section{Background} 

Hilbert's 13th problem (1900) asks the question:  are there any genuine continuous multivariate functions? Hilbert conjectured the existence of a three-variable continuous function that cannot be expressed in terms of the composition and addition of two-variable continuous functions. For a detailed exposition of the issues related to Hilbert's 13th problem, we refer the reader to \cite{morris}. 

Subsequently, in problem \#119 of the book \cite{polya}, Gy\"orgy P\'olya and G\'abor Szeg\"o, ask the question: Are there actually functions of three variables? Stated differently, is it possible to use compositions of functions of two or fewer variables to express any function of three variables?

In particular they ask whether there exist two-variable functions $\phi$ and $\psi$ such that any three-variable function $\bff$ can be expressed as:  $\bff(\varX,\varY,\varZ)= \phi(\psi(\varX,\varY),\varZ)$. Subsequently, they prove that the function $\bff(\varX,\varY,\varZ)=\varX\varY+\varX\varZ+\varY\varZ$, does not have this property.

The Kolmogorov Superposition Theorem (KST) (1957) answers this question negatively.  It shows that continuous functions of several variables can be expressed as the composition and superposition of functions of one variable. Consequently, there are no true functions of three variables.

A recent form of this result, with contributions from various researchers, including Kolmogorov, Arnol'd, Kahane, Lorenz, and Sprecher, is discussed in \cite{morris}.

Given a continuous function $\bff: [0,1]^3\rightarrow \mathbb{R}$ of  three variables, there exist real numbers $\lambda_i$, $i=1,2$, seven  single-variable continuous functions ${\bphi_k}: [0,1]\rightarrow\mathbb{R}$, $k=1,\cdots,7$, called inner functions, and a single-variable function ${\bg}: \mathbb{R}\rightarrow\mathbb{R}$, called outer function, such that
\begin{align}\label{eq:kst-original}
%\begin{array}{l}
\bff(\varX,\varY,\varZ)=\sum_{k=1}^7 {\bg} ({\bphi_k}(\varX)+\lambda_1{\bphi_k}(\varY)+ \lambda_2{\bphi_k}(\varZ)),~~
\mbox{for all}~~ (\varX,\varY,\varZ)\in \left[0,1\right]^3.
%\end{array}
\end{align}
In this result, $\lambda_i$ and $\bphi_k$ do not depend on $\bff$. Thus, for $\ord=3$,  eight functions are needed together with two real scalars $\lambda_i$. Typical forms (fractal by nature) can be found in  \cite[Figure 1]{lai24} and \cite[Figure 4.3]{actor}.

%\begin{center}
%\begin{minipage}{11cm}
%\includegraphics[height=2.5cm,width=3.9cm]{figures/outer_gatech.pdf}\qquad\qquad
%\includegraphics[scale=0.43]{figures/actor.pdf}\\
%\qquad {\bf Left pane}: outer function $\bg$ \quad~ {\bf Right pane}: 
%inner functions $\bphi_k$: fractal.
%\end{minipage}
%\end{center}

\noindent
\begin{remark} KST simplifies dealing with multivariate functions by breaking them down into manageable components. From a computational point of view, this result is hard to use, because the (inner) functions $\bphi_k$ and the (outer) function $\bg$ are fractal and can only be determined iteratively. Recently, the so-called Kolmogorov-Arnold networks (KANs) have emerged as a way to approximate the inner and outer functions using, for instance, splines; for details, we refer the reader to \cite{kan}.
\end{remark}

Our aim in this work is to establish connections between the Loewner framework for rational interpolation of multivariate functions and the Kolmogorov Superposition Theorem (KST). We note that the KST involves a decoupling of the variables of the underlying multivariate function. The main result shows that for rational multivariate functions, variable decoupling can be achieved by means of simple function evaluation.  As a result, no computation is required. This is to be compared with the iterative fractal function determination required in KST as quoted in \cite{morris} and analyzed in \cite{actor}.

We show that the null space of $1$D-Loewner matrices, associated with the single-variable rational functions ~$\bH(\varS)=\frac{\num(\varS)}{\den(\varS)}$, depends exclusively on the denominator $\bd(\varS)$, evaluated at appropriately chosen (right) interpolation points. Thus, it can be written down by inspection. \\%[1mm]
Consequently, making use of the results in \cite{AGPV25}, summarized above, the decoupling of the variables of multivariate rational functions can be achieved by means of simple evaluations of a set of single-variable rational functions associated with $\bH(\varS)$. 

%The computational complexity reported in \,{\bf Table 1}\, is not necessary, because {\bf no computation of null spaces} is necessary.

\section{Lagrange interpolation, barycentric representation and the Loewner matrix}

\subsection{Lagrange interpolation}

Given $\lambda_j\in\IC, \ j=1,\ldots,\ladim$ with $ \lambda_j\neq \lambda_i$, let the basis of polynomials of degree $k-1$ be composed of:
\begin{equation}\label{eq:basis-lag}
\lag{\varS}{\lambda}{j}=\prod_{\underset{i \neq j}{1 \leq i \leq k}}^\ladim (\varS-\lambda_i),~ \text{for all}~ j=1,\ldots,\ladim.
\end{equation}
For given constants $\alpha_j\neq 0$ (where $1 \leq j \leq k$), the barycentric formula discussed in \cite{BT04} yields a rational interpolant $\bg(\varS)$ for the given data (pairs of nodes and points) $\{(\lambda_j,w_j) \vert \ 1 \leq j \leq k \}$, as follows:
\begin{equation}\label{eq:lag}
\bg(\varS)=\frac
{\sum_{j=1}^{\ladim} \frac{\alpha_j w_j}{\varS-\lambda_j}}
{\sum_{j=1}^{\ladim} \frac{\alpha_j}{\varS-\lambda_j}} 
= \frac{\sum_{j=1}^\ladim \alpha_j w_j \lag{\varS}{\lambda}{j}}{\sum_{j=1}^\ladim\alpha_j\lag{\varS}{\lambda}{j}}.
\end{equation}
It directly follows that $\bg(\lambda_j)=w_j$, for all $1 \leq j \leq k$, i.e., $\bg(\varS)$ in (\ref{eq:lag}) interpolates the data. We also note that \eqref{eq:lag} is parameterized by the weights $\alpha_j$'s. One may choose these parameters to recover certain quantities, e.g., the Lagrange interpolating polynomial (as shown below). Let $\gamma_j$'s be the  Lagrange coefficients associated with the $\lambda_j$'s; these can be explicitly written as: $\gamma_j=1/\displaystyle \prod_{\underset{i \neq j}{1 \leq i \leq k}}(\lambda_j-\lambda_i)$, for ~$j=1,\ldots,k$. 

\vspace{-1mm}

Next, introduce the normalized Lagrange basis composed of $\lagn{\varS}{\lambda}{j}$, which is determined by scaling the polynomials as, 
\begin{equation}\label{eq:basis-lagn}
\lagn{\varS}{\lambda}{j}=\gamma_j \lag{\varS}{\lambda}{j}=\frac{\lag{\varS}{\lambda}{j}}{\lag{\lambda_i}{\lambda}{j}}=\prod_{\underset{i \neq j}{1 \leq i \leq k}}\frac{\varS-\lambda_i}{\lambda_j-\lambda_i},\ \text{for all} \ j=1,\ldots,\ladim.
\end{equation}
Given the normalized Lagrange basis \eqref{eq:basis-lagn}, and $\alpha_{j}\in\IC$, $j=1,\cdots,\ladim$, let $\bpi(\varS)=\sum_{j=1}^\ladim w_{j}\lagn{\varS}{\lambda}{j}$ be the Lagrange interpolating polynomial of degree (less than or equal to) $\ladim-1$ which satisfies the interpolation conditions $\bpi(\lambda_j)=w_{j}$, for all $j=1,\cdots,\ladim$.

By choosing $\alpha_j = \gamma_j$ for all $1 \leq j \leq k$, and substituting these into (\ref{eq:lag}), one can write the barycentric form $\bg_{1}(\varS)$ for this choice of $\alpha_j$'s, as:
\begin{equation}\label{eq:lag2poly}
\bg_{1}(\varS)= \frac{\sum_{j=1}^\ladim w_j  \gamma_j \lag{\varS}{\lambda}{j}}{\sum_{j=1}^\ladim\gamma_j\lag{\varS}{\lambda}{i}}
= \frac{\sum_{j=1}^\ladim w_j \lagn{\varS}{\lambda}{j}}{\sum_{j=1}^\ladim \lagn{\varS}{\lambda}{j}}=  \sum_{j=1}^\ladim w_j \lagn{\varS}{\lambda}{j} = \bpi(s), \ \ \ \text{since} \ \sum_{j=1}^\ladim \lagn{\varS}{\lambda}{j} = 1.
\end{equation}
Now, assume that data comes from sampling the function $\bH(\varS) = \frac{\num(\varS)}{\den(\varS)}$ at the $\lambda_j$'s, i.e., $w_j = \frac{\num(\lambda_j)}{\den(\lambda_j)}$. We also assume that the degrees of $\num(\varS)$ and $\den(\varS)$ are less than or equal to $k-1$. 

%Clearly, if $d(s) = 1 \forall s$, then the Lagrange interpolating polynomial $\bg_{\textrm{poly}}(s)$ recovers (is identical to) $n(s)$.

%How to choose the weights $\alpha_j$'s so that $g(s)$ coincides to $H(s)$?! To make up for the denominator part of $H(s)$, one needs to choose 

In order that $\bg(\varS) = \bH(\varS)$, one must choose the weights $\alpha_j$'s so that
$\alpha_j =  \gamma_j \den(\lambda_j)$, for all $1 \leq j \leq k$. By substituting these values into (\ref{eq:lag}), one can write the barycentric form $\bg_{2}(\varS)$ for this choice of $\alpha_j$'s, as:
\begin{equation}\label{eq:lag2}
\bg_{2}(s)= \frac{\sum_{j=1}^\ladim w_j \den(\lambda_j)  \gamma_j \lag{\varS}{\lambda}{j}}{\sum_{j=1}^\ladim \den(\lambda_j) \gamma_j\lag{\varS}{\lambda}{i}}
= \frac{\sum_{j=1}^\ladim \frac{\num(\lambda_j)}{\den(\lambda_j)} \den(\lambda_j)  \lagn{\varS}{\lambda}{j}}{\sum_{j=1}^\ladim \den(\lambda_j)  \lagn{\varS}{\lambda}{j}} =  \frac{\sum_{j=1}^\ladim \num(\lambda_j) \lagn{\varS}{\lambda}{j}}{\sum_{j=1}^\ladim \den(\lambda_j)  \lagn{\varS}{\lambda}{j}} = \frac{\num(\varS)}{\den(\varS)} = \bH(\varS).
\end{equation}
In the derivation above, it is easy to see that the polynomial numerator $\sum_{j=1}^\ladim \num(\lambda_j) \lagn{\varS}{\lambda}{j}$ coincides with the polynomial $\num(\varS)$ at all $\lambda_j$'s, i.e., at $k$ nodes. %Since the degrees of both polynomials are less than $k-1$, by computing their difference, one can show that this is 0 $\forall s \in \{\lambda_1,\ldots,\lambda_k\}$, hence it is $0$ for all $s \in \IC$. 
Since the degrees of both polynomials are less than or equal to $k-1$, it follows that  $\sum_{j=1}^\ladim \num(\lambda_j) \lagn{\varS}{\lambda}{j} = \num(\varS)$, for all $\varS \in \IC$. Similar considerations hold for the denominator. Thus,   $\sum_{j=1}^\ladim \den(\lambda_j) \lagn{\varS}{\lambda}{j} = \den(\varS)$, for all $s \in \IC$.\\[-2mm]

Basically, we have shown here how the $k$ weights $\alpha_j$'s can be explicitly chosen to exactly reconstruct any rational function $\bH(\varS)$ of McMillan degree less than $k$, by using the barycentric interpolating form in (\ref{eq:lag}). The formula reads $\alpha_j=\gamma_j \den(\lambda_j)$, or equivalently, $\alpha_j=\displaystyle \den(\lambda_j)/\prod_{\underset{i \neq j}{1 \leq i \leq k}}(\lambda_j-\lambda_i)$, for ~$j=1,\ldots,k$.

\vspace{-3mm}

%%%%%%%%%%%%%%%%%%%%%%%%%%%%%%%%%%%%%%%%%%%%%%%%%
\subsection{The Loewner matrix perspective}

One can obtain the same result as the one in the previous section by using a Loewner matrix approach. This route is motivated by a practical, data-driven scenario in which one does not have explicit access to the denominator values $\den(\lambda_j), \ 1 \leq j \leq k$, but only to the pairs $\{(\lambda_j,w_j) \vert \ 1 \leq j \leq k \}$ (where $w_j = \num(\lambda_j)/\den(\lambda_j)$).

As before, one can choose free parameters $\alpha_j$ so that additional interpolation constraints hold: $\bg(\mu_i)=v_i$, $i=1,\cdots,\mudim$, where  $\mu_i\neq\mu_j$. As shown in \cite{AA86}, for this to hold, one needs to impose the condition $\IL\bc =0$, where $\IL\in\IC^{\mudim\times \ladim}$ is a Loewner matrix given by $(\IL)_{i,j}=\frac{v_i-w_j}{\mu_i-\lambda_j}$, and $\bc \in \IC^k$, with $(\bc)_i = \alpha_i$.

\noindent
Assume that the  pairs $(\lambda_j,w_j)$ and $(\mu_i,v_i)$ are samples of $\frac{\num(\varS)}{\den(\varS)}$, and form the Loewner matrix  $\IL$ as above. Here, we consider mutually distinct $\mu_i,\lambda_j\in\IC$, $i=1,\cdots,\mudim$, $j=1,\cdots,\ladim$ and $\sysord=\mbox{max}(\mbox{deg}(\num),$ $\mbox{deg}(\den))$.  The following result,  first stated in \cite{AA86}, holds:
\begin{equation} \label{eq:loewner_bezoutian}
\bDelta_\mu \, \IL \,\bDelta_\lambda = \bV_\mu^\top \, \bB(\num,\den)\,  \bV_\lambda ,
\end{equation}
with diagonal matrices 
$\bDelta_\mu=\mbox{diag}\left[\den(\mu_1)\cdots\den(\mu_\mudim)\right]\in\IC^{\mudim\times \mudim}$ and $\bDelta_\lambda=\mbox{diag}\left[\den(\lambda_1)\cdots\den(\lambda_\ladim)\right]
\in\IC^{\ladim\times \ladim}$,
$\bV_\mu \in \IC^{\sysord \times \mudim}$, $\bV_\lambda \in \IC^{\sysord \times \ladim}$ are Vandermonde matrices given by $(\bV_\mu)_{i,j}=\mu_j^{i-1}$, $(\bV_\lambda)_{i,j}=\lambda_j^{i-1}$, and $\bB(\num,\den)\in\IC^{\sysord\times \sysord}$,  is  the Bezoutian of the polynomials $\num$ and $\den$. It is well known that $\bB(\num,\den)$ is non-singular if, and only if, the polynomials $\num$ and $\den$ are coprime. It readily follows that for $\mudim,\ladim> \sysord$, the rank of the Loewner matrix is also $\sysord$.

\begin{theorem}\label{theo1} Let $\bgamma\in\IC^\ladim$, be the vector of Lagrange coefficients associated with the $\lambda_j$'s (defined in the previous section). If $\sysord=\ladim-1$,  the (right) null space of $\bV_\lambda$  is spanned by $\bgamma$. Therefore, the (right)  null space of matrix $\IL$ is spanned by the barycentric weights:
\begin{equation}\label{eq:bary_den}
\baryden{\varS}=\bDelta_\lambda\bgamma=
\left[\begin{array}{cccc}
\bgamma_1  \bd(\lambda_1) & \bgamma_1  \bd(\lambda_2) & \cdots & \bgamma_{k} \bd(\lambda_{\ladim})
\end{array}\right]^\top\in\IC^{\ladim}.
\end{equation}
\end{theorem}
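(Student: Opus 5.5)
The proof has two parts: first identify the null space of $\bV_\lambda$, then transfer it to $\IL$ through the factorization \eqref{eq:loewner_bezoutian}.

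\emph{Null space of $\bV_\lambda$.} With $\sysord=\ladim-1$, the matrix $\bV_\lambda\in\IC^{(\ladim-1)\times\ladim}$ has rows $(\lambda_1^{m},\ldots,\lambda_\ladim^{m})$ for $m=0,\ldots,\ladim-2$. Its first $\ladim-1$ columns form a square Vandermonde matrix on distinct nodes, which is nonsingular. Hence $\bV_\lambda$ has full row rank $\ladim-1$, and its null space is one-dimensional. It therefore suffices to check that $\bV_\lambda\bgamma=0$, i.e. that
\[
\sum_{j=1}^{\ladim}\gamma_j\lambda_j^{m}=0 \quad \text{for } m=0,\ldots,\ladim-2 .
\]
I will get this from the Lagrange identity already used in Section 2: for any polynomial $p$ of degree at most $\ladim-1$,
\[
p(\varS)=\sum_{j}p(\lambda_j)\,\gamma_j\lag{\varS}{\lambda}{j}.
\]
Each $\lag{\varS}{\lambda}{j}$ is monic of degree $\ladim-1$, so comparing the coefficients of $\varS^{\ladim-1}$ gives
\[
\sum_j\gamma_j\,p(\lambda_j)=\text{coefficient of }\varS^{\ladim-1}\text{ in } p .
\]
Taking $p=\varS^m$ with $m\le\ladim-2$ makes the right-hand side zero. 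This is the first claim.

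\emph{Null space of $\IL$.} Here I use \eqref{eq:loewner_bezoutian}. First I need the denominators to be nonvanishing at the nodes, $\den(\lambda_j)\neq0$ and $\den(\mu_i)\neq0$; this is implicit because the data $w_j,v_i$ are finite samples, so I will state it as a standing assumption. Then $\bDelta_\mu$ and $\bDelta_\lambda$ are invertible, and
\[
\IL\,\bDelta_\lambda\bgamma=\bDelta_\mu^{-1}\bV_\mu^\top\bB(\num,\den)\bV_\lambda\bgamma=0 .
\]
So $\bDelta_\lambda\bgamma$, whose entries are $\gamma_j\den(\lambda_j)$, lies in the null space of $\IL$. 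It is nonzero, because every $\gamma_j\neq0$ and every $\den(\lambda_j)\neq0$.

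\emph{Dimension count.} To show the null space of $\IL$ is spanned by this single vector, I need $\operatorname{rank}\IL=\sysord=\ladim-1$; then the null space is one-dimensional and we are done. The paper states this rank right after \eqref{eq:loewner_bezoutian}, under the assumptions $\mudim>\sysord$ and $\num,\den$ coprime. The argument is short. $\bB(\num,\den)$ is nonsingular by coprimality. $\bV_\lambda$ has full row rank by the first step. $\bV_\mu^\top\in\IC^{\mudim\times\sysord}$ has full column rank once $\mudim\ge\sysord$, again because of distinct nodes. A product of full-column-rank, nonsingular and full-row-rank factors has rank $\sysord$.

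The main obstacle is making these hypotheses explicit, since the statement leaves them implicit: coprimality of $\num$ and $\den$, enough right points ($\mudim\ge\sysord$), and nonvanishing of $\den$ at all nodes. Once they are in place, every step above is linear algebra on Vandermonde matrices.
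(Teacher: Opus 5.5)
Your proposal is correct and follows the same route as the paper's (sketched) argument: show that $\bgamma$ spans the one-dimensional null space of $\bV_\lambda$, then transfer it to $\IL$ through the Loewner--Bezoutian factorization \eqref{eq:loewner_bezoutian}. You also supply the details the paper leaves implicit, and handle them correctly: the coefficient-comparison proof that $\bV_\lambda\bgamma=0$, the rank count $\operatorname{rank}\IL=\sysord$, and the standing hypotheses (coprimality, $\mudim\ge\sysord$, $\den$ nonvanishing at the nodes).
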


% \begin{proof}
% First check that $\bgamma$ spans the null space of the Vandermonde matrix, then, following \eqref{eq:loewner_bezoutian}, the right null space of $\IL$ is spanned by $\bDelta_\lambda\bgamma$, which concludes the proof.
% \end{proof}

\begin{corollary} \label{cor1} Similarly, the barycentric weights associated with the numerator are: 
\begin{equation}\label{eq:bary_num}
\barynum{\varS}=
\left[\begin{array}{cccc}
\bgamma_1\num(\lambda_1)& \bgamma_2 \num(\lambda_2) &\cdots&\bgamma_{\ladim}\num(\lambda_{\ladim})
\end{array}\right]^\top\in\IC^{\ladim}.
\end{equation}
Thus the rational function $\bH(\varS)$ can be expressed as:
\begin{equation}\label{eq:bary}
\bH(\varS)=\frac{\sum_{j=1}^\ladim\frac{\bgamma_j \num(\lambda_j)}{\varS-\lambda_j}}
{\sum_{j=1}^\ladim\frac{\bgamma_j \den(\lambda_j)}{\varS-\lambda_j}} =
\frac{\sum_{j=1}^\ladim(\barynum{\varS})_{j}\cdot\bS_j}{\sum_{j=1}^\ladim(\baryden{\varS})_j\cdot\bS_j},~
\bS=\left[\frac{1}{\varS-\lambda_1}~\cdots~ \frac{1}{\varS-\lambda_\ladim}\right]^\top\,\in\,\IC^\ladim[\varS]
\end{equation}
%where $\bS$ is the vector composed of the Lagrange monomials $\bS=\left[\frac{1}{\varS-\lambda_1}~\cdots~ \frac{1}{\varS-\lambda_k}\right]^\top\,\in\,\IC^k[\varS]$.
\end{corollary}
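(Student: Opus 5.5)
The statement to prove is Corollary~\ref{cor1}. I would obtain it directly from the explicit barycentric construction of the previous subsection, using Theorem~\ref{theo1} only to identify the denominator weights. There are three steps.

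First, divide both the numerator and the denominator of the barycentric form \eqref{eq:lag} by $\prod_{i=1}^{\ladim}(\varS-\lambda_i)$. Since $\lag{\varS}{\lambda}{j}/\prod_{i}(\varS-\lambda_i)=1/(\varS-\lambda_j)$, the expression $\bg_2(\varS)$ in \eqref{eq:lag2} becomes
\[
\frac{\sum_{j=1}^\ladim \gamma_j\den(\lambda_j)w_j/(\varS-\lambda_j)}{\sum_{j=1}^\ladim \gamma_j\den(\lambda_j)/(\varS-\lambda_j)}.
\]
Substituting $w_j=\num(\lambda_j)/\den(\lambda_j)$ cancels $\den(\lambda_j)$ in the numerator sum. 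The numerator coefficients are then exactly $\gamma_j\num(\lambda_j)$, which is \eqref{eq:bary_num}. The denominator coefficients are $\gamma_j\den(\lambda_j)$, which is \eqref{eq:bary_den} from Theorem~\ref{theo1}. So the numerator weights play the same role for $\num$ that $\baryden{\varS}$ plays for $\den$.

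Second, I would verify the identity $\bH=\eqref{eq:bary}$ as in \eqref{eq:lag2}. Multiplying back by $\prod_i(\varS-\lambda_i)$ gives
\[
\sum_j \gamma_j\num(\lambda_j)\lag{\varS}{\lambda}{j}=\sum_j \num(\lambda_j)\lagn{\varS}{\lambda}{j}.
\]
This is the Lagrange interpolant of $\num$ at the $\ladim$ distinct nodes. Because $\deg\num\le\sysord=\ladim-1$, it equals $\num(\varS)$ identically. The same argument shows the denominator sum equals $\den(\varS)$, so the quotient is $\bH(\varS)$. Writing the sums as inner products with $\bS$ gives the second expression in \eqref{eq:bary}.

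Third, I would handle the "similarly" phrasing, which suggests a null-space interpretation: $\barynum{\varS}$ should be obtainable like $\baryden{\varS}$ in Theorem~\ref{theo1}. One route uses the fact that $\barynum{\varS}$ is the componentwise product of the vector $[w_1,\ldots,w_\ladim]^\top$ with $\baryden{\varS}$. Another route swaps the roles of numerator and denominator, since the Loewner matrix of $\den/\num$ has null space $\mathrm{diag}(\num(\lambda_j))\bgamma$ by the same Bezoutian factorization \eqref{eq:loewner_bezoutian}. This relies on $\bB(\den,\num)=-\bB(\num,\den)$ and on $\bgamma$ spanning the kernel of $\bV_\lambda$.

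The main obstacle is a subtlety rather than a computation. The formula must avoid a vanishing $\den(\lambda_j)$, or for the swapped interpretation a vanishing $\num(\lambda_j)$. The node and degree assumptions $\sysord=\ladim-1$ must also match. I would state these nondegeneracy assumptions explicitly, namely $\den(\lambda_j)\neq0$ and $\num$, $\den$ coprime, and note that the direct identity of the second step needs only $\deg\num,\deg\den\le\ladim-1$.
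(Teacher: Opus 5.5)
Your proposal is correct and follows essentially the paper's own route. The corollary is the barycentric identity \eqref{eq:lag2} with $\alpha_j=\gamma_j\den(\lambda_j)$, so that $\alpha_j w_j=\gamma_j\num(\lambda_j)$, rewritten in the $1/(\varS-\lambda_j)$ form of \eqref{eq:lag}; it rests on the same degree argument $\sum_j \num(\lambda_j)\lagn{\varS}{\lambda}{j}=\num(\varS)$ (and likewise for $\den$), while your third step, which reads $\barynum{\varS}$ as the kernel vector of the Loewner matrix of $\den/\num$, is a valid extra interpretation that the paper does not use and the corollary does not require.
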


\begin{remark}   Theorem \ref{theo1} and Corollary \ref{cor1} show that the barycentric weights of single-variable rational functions can be written down by inspection; in other words, no null space computation is required. This result also shows that the right/left null space of the Loewner matrix based on the rational function ${\num(\varS)}/{\den(\varS)}$, depends exclusively on the denominator $\den(\varS)$ and the right/left interpolation points.
\end{remark}

\begin{remark}   There are several similarities between the above theorem and a similar result in realization theory; for details, see \cite[Section 4.4]{acabook}. Roughly speaking, the Loewner framework (with the main tool being the Loewner matrix $\IL$) deals with data at finite locations of the complex plane, while the Hankel framework (main tool: Hankel matrices $\IH$) deals with data at infinity. In both cases, the null space of $\IL$ and $\IH$ encodes the information about the solution. And both cases have similarities, namely: (a) the denominator of the solution depends on the null space of the associated Loewner or Hankel matrix, (b) the null space of both Loewner or Hankel matrices is determined by a single polynomial. Thus, the structure of the kernel of Loewner matrices mentioned above parallels the structure of the kernel of Hankel matrices (see \cite{gutknecht} for details), and (c) the null space of the Loewner (resp. Hankel) matrix gives the coefficients of the polynomial in the  Lagrangian (resp. monomial) basis.
\end{remark}

Taking advantage of \cite{AGPV25}, the rest of this paper is dedicated to showing that this holds for multivariate polynomials and hence also for rational functions as well, i.e., the barycentric representation of multivariate rational functions can be written down by inspection. This leads to a decoupling of both the variables of multivariate functions and the numerator/denominator, with consequences (in more general settings)  on the curse-of-dimensionality (we refer the reader to \cite{AGPV25} for details). This also shows how the variable decoupling of multivariate functions introduced by Kolmogorov is the result of simple evaluations, in the rational case.

\section{The multivariate polynomial case}

Given the considerations of the previous section, it is clear that we can restrict our attention to the polynomial case instead of the rational case.
%Given the Lagrange basis \eqref{eq:basis-lagn}, and $\alpha_{j_1}\in\IC$, $j_1=1,\cdots,\ladimi{1}$, $\pi(\varS)=\sum_{j_1=1}^\ladimi{1}\alpha_{j_1}\lagn{\varS}{\vars}{j_1}$, is the Lagrange polynomial of degree less than $\ladimi{1}$ which satisfies the interpolation  conditions $\pi(\varsi{j_1})=\alpha_{j_1}, ~j_1=1,\cdots,\ladimi{1}$. Clearly if all $\alpha_{j_1}=1$, the interpolating polynomial is identically equal to one.

Consider now two Lagrange bases in  two variables: $\varS$ and $\varT$, defined by mutually distinct complex numbers  $\varsi{j_1}$, $j_1=1,\cdots,\ladimi{1}$ and  $\varti{j_2}$, $j_2=1,\cdots,\ladimi{2}$, respectively. We define the vectors of normalized Lagrange polynomial bases:
\begin{equation}\label{eq:basis-lagn-vect}
\bL^\vars(\varS)=
\left[\begin{array}{ccc} 
\lagn{\varS}{\vars}{1}&\cdots&
\lagn{\varS}{\vars}{\ladimi{1}}\end{array}\right]^\top
\in\IC^\ladimi{1}[\varS],
\quad
\bL^\vart(\varT)=
\left[\begin{array}{ccc} 
\lagn{\varT}{\vart}{1}&\cdots&
\lagn{\varT}{\vart}{\ladimi{2}}\end{array}\right]^\top
\in\IC^\ladimi{2}[\varT],
%\bL^\mu(\bt)=\left[\begin{array}{ccc} \ell_1(\bt)&\cdots&\ell_m(\bt)\end{array}\right]^\top \in\IC^\mu[\bt].
\end{equation}

A basis for 2-variable polynomials in $\varS$ and $\varT$, is given by the entries  of $\bL^\vars(\varS)\otimes\bL^\vart(\varT)$, recovering the Kronecker structure exposed in \cite{AGPV25}. 

\begin{proposition}
A polynomial $\poly(\varS,\varT)$ of degree less than $\ladimi{1}$, $\ladimi{2}$ in the variables $\varS$, $\varT$, respectively, can be expressed in terms of the above quantities as:
\begin{equation}
\poly(\varS,\varT)=
\left[\alpha_{1,1}\cdots\alpha_{1,\ladimi{2}}~\cdots~\alpha_{\ladimi{1},1}\cdots\alpha_{\ladimi{1},\ladimi{2}}\right]\,
\pare{\bL^{\vars}(\varS)\otimes\bL^{\vart}(\varT)}.
\end{equation}
It satisfies the interpolation conditions~ 
$\poly(\varsi{j_1},\varti{j_2})=\alpha_{j_1,j_2}$, $j_1=1,\cdots,\ladimi{1}$, $j_2=1,\cdots,\ladimi{2}$. 
\end{proposition}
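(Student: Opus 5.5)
The plan has two parts. First, the entries of $\bL^{\vars}(\varS)\otimes\bL^{\vart}(\varT)$ form a basis of the space $\mathcal{P}_{\ladimi{1},\ladimi{2}}$ of polynomials of degree less than $\ladimi{1}$ in $\varS$ and less than $\ladimi{2}$ in $\varT$; this gives the representation. Second, the coefficients are identified with point values via the cardinal property of the normalized Lagrange basis \eqref{eq:basis-lagn}, namely $\lagn{\varsi{i}}{\vars}{j}=\delta_{ij}$.

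I would start with the interpolation property, since it is immediate. Expanding the row vector times the Kronecker product gives
\begin{equation*}
\poly(\varS,\varT)=\sum_{j_1=1}^{\ladimi{1}}\sum_{j_2=1}^{\ladimi{2}}\alpha_{j_1,j_2}\,\lagn{\varS}{\vars}{j_1}\,\lagn{\varT}{\vart}{j_2},
\end{equation*}
because the ordering $(1,1),\dots,(1,\ladimi{2}),\dots,(\ladimi{1},\ladimi{2})$ of the coefficients matches the ordering of the entries of the Kronecker product. Evaluating at $(\varsi{i_1},\varti{i_2})$ and using the cardinal property in each variable collapses the double sum to $\alpha_{i_1,i_2}$.

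For the representation claim, let $\poly$ be any polynomial with $\deg_{\varS}<\ladimi{1}$ and $\deg_{\varT}<\ladimi{2}$, and set $\alpha_{j_1,j_2}:=\poly(\varsi{j_1},\varti{j_2})$. Define $\bq$ as the right-hand side built from these $\alpha$'s. The difference $\br=\poly-\bq$ lies in $\mathcal{P}_{\ladimi{1},\ladimi{2}}$ and vanishes on the whole grid. To conclude $\br\equiv 0$, I would argue one variable at a time, in the same way the one-variable identity $\sum_j \num(\lambda_j)\lagn{\varS}{\lambda}{j}=\num(\varS)$ was obtained in Section 2.
\begin{itemize}
\item Fix $\varT=\varti{j_2}$. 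Then $\br(\varS,\varti{j_2})$ is a univariate polynomial of degree less than $\ladimi{1}$ with $\ladimi{1}$ distinct zeros, so it is identically zero in $\varS$.
\item Now fix any $\varS\in\IC$. Then $\br(\varS,\varT)$ is a polynomial in $\varT$ of degree less than $\ladimi{2}$ that vanishes at $\ladimi{2}$ distinct points $\varti{j_2}$, so it is zero.
\end{itemize}
Hence $\br\equiv0$, and $\poly=\bq$.

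As a by-product, the linear map sending $\poly$ to its grid values is injective from $\mathcal{P}_{\ladimi{1},\ladimi{2}}$, which has dimension $\ladimi{1}\ladimi{2}$. The Kronecker entries therefore form a basis, matching the statement preceding the proposition.

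The only step requiring care is this vanishing-on-the-grid argument, that is, making sure the separate degree bounds in each variable are what justify the iterated univariate zero argument. The rest is bookkeeping of the Kronecker index ordering.
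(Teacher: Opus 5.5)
Your proof is correct and follows the route the paper takes implicitly. The paper gives no separate proof: it relies on the assertion that the entries of $\bL^{\vars}(\varS)\otimes\bL^{\vart}(\varT)$ form a basis and on the one-variable uniqueness argument of Section 2, and your cardinal-property evaluation plus the iterated univariate zero argument on the grid (using the separate degree bounds in $\varS$ and $\varT$) is exactly the tensor-product extension of that reasoning.
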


\subsection{Decoupling of the variables: the two-variable case}

The variables $\varS$ and $\varT$ of this polynomial
can be decoupled as follows. Let 
\begin{equation}
\begin{array}{c}
\bTheta^\vars(\varS)=\left[\alpha_{1,\ladimi{2}}\cdots\alpha_{\ladimi{1},\ladimi{2}}\right]^\top\odot\bL^\vars(\varS)\in\IC^\ladimi{1}[\varS],~~
\end{array}, \text {and}
\end{equation}
\begin{equation}
\left\{\begin{array}{rcl}
\bTheta_1^\vart(\varT)&=&
\frac{1}{\alpha_{1,\ladimi{2}}}\left[\alpha_{1,1}\cdots\alpha_{1,\ladimi{2}}\right]^\top\odot\bL^\vart(\varT)
\in\IC^\ladimi{2}[\varT]\\
&\vdots&\\
\bTheta_\ladimi{1}^\vart(\varT)&=&
\frac{1}{\alpha_{\ladimi{1},\ladimi{2}}}\left[\alpha_{\ladimi{1},1}\cdots\alpha_{\ladimi{1},\ladimi{2}}\right]^\top\odot\bL^\vart(\varT) \in\IC^\ladimi{2}[\varT].
\end{array}\right.
\end{equation}

\begin{lemma}The next result holds: 
\begin{equation}
\poly(\varS,\varT)=\sum_{\textrm{row-wise}}\left(\bTheta^\vars(\varS)\otimes\bI_\ladimi{2}\right)\odot\left(\left[\begin{array}{ccc} \bTheta_1^\vart(\varT)&\cdots&\bTheta_\ladimi{1}^\vart(\varT)\end{array}\right]^\top\otimes\bI_1\right).
\end{equation}
\end{lemma}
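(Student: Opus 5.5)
The plan is a direct entrywise computation. We compare the row-wise sum of the Hadamard product with the expansion of $\poly(\varS,\varT)$ given by the Proposition. We assume throughout that $\alpha_{j_1,\ladimi{2}}\neq 0$ for all $j_1$, so that the $\bTheta_{j_1}^\vart$ are well defined.

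\textbf{Step 1: expand the Proposition.} By the Kronecker structure of $\bL^\vars(\varS)\otimes\bL^\vart(\varT)$, the Proposition states
\begin{equation*}
\poly(\varS,\varT)=\sum_{j_1=1}^{\ladimi{1}}\sum_{j_2=1}^{\ladimi{2}}\alpha_{j_1,j_2}\,\lagn{\varS}{\vars}{j_1}\,\lagn{\varT}{\vart}{j_2}.
\end{equation*}
It is useful to regroup this as $\sum_{j_1}\lagn{\varS}{\vars}{j_1}\,\big(\sum_{j_2}\alpha_{j_1,j_2}\lagn{\varT}{\vart}{j_2}\big)$. For fixed $j_1$, the inner sum is a univariate polynomial in $\varT$.

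\textbf{Step 2: compute the first factor.} The $j_1$-th entry of $\bTheta^\vars(\varS)$ is $\alpha_{j_1,\ladimi{2}}\lagn{\varS}{\vars}{j_1}$. Hence $\bTheta^\vars(\varS)\otimes\bI_\ladimi{2}$ is a $\ladimi{1}\ladimi{2}\times\ladimi{2}$ block column. Its $j_1$-th block is $\alpha_{j_1,\ladimi{2}}\lagn{\varS}{\vars}{j_1}\,\bI_\ladimi{2}$.

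\textbf{Step 3: compute the second factor and the product.} The second factor, $[\bTheta_1^\vart(\varT)^\top~\cdots~\bTheta_\ladimi{1}^\vart(\varT)^\top]^\top\otimes\bI_1$, is the stacked vector of length $\ladimi{1}\ladimi{2}$. It is interpreted as broadcast along the columns, which is the only reading under which $\odot$ with a $\ladimi{1}\ladimi{2}\times\ladimi{2}$ matrix makes sense.

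Its entry $(j_1,j_2)$ is $\alpha_{j_1,j_2}\lagn{\varT}{\vart}{j_2}/\alpha_{j_1,\ladimi{2}}$. Multiplying entrywise by the identity-block structure of Step 2 leaves, in row $(j_1,j_2)$, a single nonzero entry. That entry equals
\begin{equation*}
\alpha_{j_1,\ladimi{2}}\lagn{\varS}{\vars}{j_1}\cdot\frac{\alpha_{j_1,j_2}}{\alpha_{j_1,\ladimi{2}}}\lagn{\varT}{\vart}{j_2}=\alpha_{j_1,j_2}\,\lagn{\varS}{\vars}{j_1}\,\lagn{\varT}{\vart}{j_2}.
\end{equation*}
The normalization factors cancel, and this is the reason for the scaling $1/\alpha_{j_1,\ladimi{2}}$ in the definition of $\bTheta_{j_1}^\vart$.

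\textbf{Step 4: sum.} Summing over all entries, i.e. summing the rows and then the resulting entries, gives exactly the double sum of Step 1, which proves the Lemma.

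\textbf{Main obstacle.} The only delicate point is the dimension and operator bookkeeping: fixing the precise meaning of ``$\sum_{\textrm{row-wise}}$'' and of the broadcast $\otimes\bI_1$, so that the Hadamard product is between conformable objects. I would first fix the convention that the final sum collapses everything to a scalar and check it on $\ladimi{1}=\ladimi{2}=2$. I would also note that if some $\alpha_{j_1,\ladimi{2}}=0$, the scaling can be moved to any nonzero $\alpha_{j_1,j_2}$ in that row. If an entire row of $\alpha$'s is zero, it contributes nothing and can be dropped.
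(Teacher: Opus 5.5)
Your proof is correct and follows the paper's argument: the paper also writes out both factors entry by entry, lets the $\alpha_{j_1,\ladimi{2}}$ normalizations cancel so that each row yields $\alpha_{j_1,j_2}\lagn{\varS}{\vars}{j_1}\lagn{\varT}{\vart}{j_2}$, and sums these to recover the expansion from the Proposition. The only difference is notational: the paper reads $\bTheta^\vars(\varS)\otimes\bI_{\ladimi{2}}$ as a length-$\ladimi{1}\ladimi{2}$ vector in which each entry is repeated $\ladimi{2}$ times, while your literal block-identity reading with broadcasting gives the same entries once the rows are summed.
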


\begin{proof}
The two-variable polynomial $\poly(\varS,\varT)$, can be expressed as follows:
$$%\small
\left[\begin{array}{cc}
\bTheta^\vars(\varS) \otimes\bI_{\ladimi{2}} & \bTheta^\vart(\varT) \otimes\bI_{1}  \\ \hline
\alpha_{1,\ladimi{2}} \lagn{\varS}{\vars}{1} & \frac{\alpha_{1,1}}{\alpha_{1,\ladimi{2}}} \lagn{\varT}{\vart}{1}\\
\vdots& \vdots\\
\alpha_{1,\ladimi{2}} \lagn{\varS}{\vars}{1} & \frac{\alpha_{1,\ladimi{2}}}{\alpha_{1,\ladimi{2}}} \lagn{\varT}{\vart}{k_2}\\ \hline
\vdots&\vdots\\ \hline
\alpha_{\ladimi{1},\ladimi{2}} \lagn{\varS}{\vars}{k_1}& \frac{\alpha_{\ladimi{1},1}}{\alpha_{\ladimi{1},\ladimi{2}}} \lagn{\varT}{\vart}{1}\\
\vdots &\vdots\\
\alpha_{\ladimi{1},\ladimi{2}} \lagn{\varS}{\vars}{k_11}& \frac{\alpha_{\ladimi{1},\ladimi{2}}}{\alpha_{\ladimi{1},\ladimi{2}}} \lagn{\varT}{\vart}{k_2}
\end{array}\right]=
\left[\begin{array}{c}
\left(\bTheta^\vars(\varS) \otimes\bI_{\ladimi{2}}\right) \odot \left( \bTheta^\vart(\varT) \otimes\bI_{1} \right) \\\hline
\alpha_{1,1} \lagn{\varS}{\vars}{1} \lagn{\varT}{\vart}{1} \\ 
\vdots\\
\alpha_{1,\ladimi{2}} \lagn{\varS}{\vars}{1} \lagn{\varT}{\vart}{\ladimi{2}} \\ \hline
\vdots\\ \hline
\alpha_{\ladimi{1},1} \lagn{\varS}{\vars}{k_1} \lagn{\varT}{\vart}{1} \\ 
\vdots\\
\alpha_{\ladimi{1},\ladimi{2}} \lagn{\varS}{\vars}{k_1} \lagn{\varT}{\vart}{\ladimi{2}} 
\end{array}\right].
$$
Therefore, the sum of the entries of the last expression yields $\poly(\varS,\varT)$. %\sq
\end{proof}

\begin{remark}
In other words, there are $1+\ladimi{1}$ decoupled single-variable functions, namely: $\poly(\varS,\varti{k_2})$, and $\bp(\varsi{j_1},\varT)$, $j_1=1,\cdots,\ladimi{1}$. By switching the role of the variables $\varS$ and $\varT$ we obtain $1+\ladimi{2}$ decoupled single-variable functions, namely $\bp(\varsi{k_1},\varT)$ and $\bp(\varS,\varti{j_2})$, $j_2=1,\cdots,\ladimi{2}$.
\end{remark}

%\subsection{The three-variable case}
\subsection{Decoupling of the variables: the three-variable case}

To convey the general idea of variable decoupling,
avoiding involved notation, we examine the special
case of three-variable polynomials with degrees $(\sysordi{1},\sysordi{2},\sysordi{3})=(1,2,1)$, i.e. $(\ladimi{1},\ladimi{2},\ladimi{3})=(2,3,2)$, respectively. For mutually exclusive $\varsi{j_1}$, $\varti{j_2}$, $\varzi{j_3}$, the normalized Lagrange basis vectors are:
\begin{equation}\label{NL3}%\small
\begin{array}{l}
\lagn{\varS}{\vars}{1}=\frac{\varS-\varsi{2}}{\varsi{1}-\varsi{2}},~
\lagn{\varS}{\vars}{2}=\frac{\varS-\varsi{1}}{\varsi{2}-\varsi{1}}
~\Rightarrow~
\bL^\vars(\varS)=\left[\lagn{\varS}{\vars}{1}~~\lagn{\varS}{\vars}{2}\right]^\top\in\IC^{\ladimi{1}}[\varS],\\[2mm] %[3mm]
\lagn{\varT}{\vart}{1}=\frac{(\varT-\varti{2})(\varT-\varti{3})}{(\varti{1}-\varti{2})(\varti{1}-\varti{3})},~
\lagn{\varT}{\vart}{2}=\frac{(\varT-\varti{1})(\varT-\varti{3})}{(\varti{2}-\varti{1})(\varti{2}-\varti{3})},~
\lagn{\varT}{\vart}{3}=\frac{(\varT-\varti{1})(\varT-\varti{2})}{(\varti{3}-\varti{1})(\varti{3}-\varti{2})},\\[2mm] 
~\Rightarrow~
\bL^\vart(\varT)=\left[\lagn{\varT}{\vart}{1}~~\lagn{\varT}{\vart}{2}~~\lagn{\varT}{\vart}{3} \right]^\top\in\IC^{\ladimi{2}}[\varT], \\[2mm]
\lagn{\varZ}{\varz}{1}=\frac{\varZ-\varzi{2}}{\varzi{1}-\varzi{2}},~
\lagn{\varZ}{\varz}{2}=\frac{\varZ-\varzi{1}}{\varzi{2}-\varzi{1}}
~\Rightarrow~
\bL^\varz(\varZ)=\left[\lagn{\varZ}{\varz}{1}~~\lagn{\varZ}{\varz}{2}\right]^\top\in\IC^{\ladimi{3}}[\varZ],%\\ %[3mm]
\end{array}
\end{equation}
Assuming that $\ladimi{1}\ladimi{2}\ladimi{3}$ interpolation values are given: $\poly(\varsi{j_1},\varti{j_2},\varzi{j_3})=\bw_{j_1,j_2,j_3}$, $j_1=1,2$, $j_2=1,2,3$, and $j_3=1,2$, define the coefficient vectors:
$$%\small
\begin{array}{l}
\pi^\vars=[\bw_{1,3,2} ~~\bw_{2,3,2}]^\top\\ \hline
\pi^\vart_1=[\bw_{1,1,2}~~\bw_{1,2,2}~~\bw_{1,3,2}]^\top,~~
\pi^\vart_2=[\bw_{2,1,2}~~\bw_{2,2,2}~~\bw_{2,3,2}]^\top\\\hline
\pi^\varz_{1,1}=[\bw_{1,1,1}~~\bw_{1,1,2}]^\top,~~
\pi^\varz_{1,2}=[\bw_{1,2,1}~~\bw_{1,2,2}]^\top,~~
\pi^\varz_{1,3}=[\bw_{1,3,1}~~\bw_{1,3,2}]^\top,~~\\
\pi^\varz_{2,2}=[\bw_{2,2,1}~~\bw_{2,2,2}]^\top,~~
\pi^\varz_{2,1}=[\bw_{2,1,1}~~\bw_{2,1,2}]^\top,~~
\pi^\varz_{2,3}=[\bw_{2,3,1}~~\bw_{2,3,2}]^\top\\
\end{array}
$$
Making use of these quantities, we define the polynomial vectors of dimension $\ladimi{1}=2$, $\ladimi{1}\ladimi{2}=6, \ladimi{1}\ladimi{2}\ladimi{3}=12$, respectively: $\bTheta^\varz(\varZ)=\left[\begin{array}{cccccc}
\pi^\varz_{1,1}\bL^\varz(\varZ) &
\pi^\varz_{1,2}\bL^\varz(\varZ) &
\pi^\varz_{1,3}\bL^\varz(\varZ) &
\pi^\varz_{2,1}\bL^\varz(\varZ) &
\pi^\varz_{2,2}\bL^\varz(\varZ) &
\pi^\varz_{2,3}\bL^\varz(\varZ)
\end{array}\right]^\top
$, $\bTheta^\vart(\varT)=\left[\begin{array}{cc}
\pi^\vart_1\bL^\vart(\varT)&
\pi^\vart_2\bL^\vart(\varT)
\end{array}\right]^\top$, and $\bTheta^\vars(\varS)=\pi^\vars\bL^\vars(\varS)$.

\begin{proposition}\label{prop:decoupling_three}
The polynomial $\poly(\varS,\varT,\varZ)$ is recovered from the following nine decoupled single-variable functions $\bphi(\varS)=\poly(\varS,\varti{3},\varzi{2})$, 
$\bpsi_{j_1}(\varT)=\poly(\varsi{j_1},\varT,\varzi{2})$, $j_1=1,2$, and $\bomega_{j_1,j_2}=\poly(\varsi{j_1},\varti{j_2},\varZ)$, $j_1=1,2$,
$j_2=1,2,3$:
\begin{equation}\label{eq:kst-lf}
\poly(\varS,\varT,\varZ)=\sum_{\mbox{row-wise}}\left(\bTheta^\vars(\varS)\otimes\bI_{6}\right)\odot \left(\bTheta^\vart(\varT)\otimes\bI_{2}\right)\odot \left(\bTheta^\varz(\varZ)\otimes\bI_{1}\right).
\end{equation}
\end{proposition}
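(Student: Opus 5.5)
The plan is to reduce the statement to the tensor-product Lagrange interpolation formula (the three-variable analogue of the Proposition of this section), and then check that the row-wise sum in \eqref{eq:kst-lf} reproduces exactly the $\ladimi{1}\ladimi{2}\ladimi{3}=12$ terms of that formula.

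\textbf{Step 1 (interpolation formula).} Since $\poly$ has degrees at most $(1,2,1)=(\ladimi{1}-1,\ladimi{2}-1,\ladimi{3}-1)$, I will first establish
\begin{equation*}
\poly(\varS,\varT,\varZ)=\sum_{j_1=1}^{2}\sum_{j_2=1}^{3}\sum_{j_3=1}^{2}\bw_{j_1,j_2,j_3}\,\lagn{\varS}{\vars}{j_1}\lagn{\varT}{\vart}{j_2}\lagn{\varZ}{\varz}{j_3}.
\end{equation*}
The argument is the one already used in the univariate case. Both sides are polynomials with the same degree bounds and they agree on the grid, because $\lagn{\varsi{i}}{\vars}{j}=\delta_{ij}$. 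Their difference therefore vanishes on the grid. Viewed as a polynomial in $\varZ$ of degree $\le 1$ it vanishes at both $\varzi{j_3}$, so it is zero in $\varZ$; repeating this in $\varT$ and then $\varS$ shows the difference is identically zero.

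\textbf{Step 2 (the decoupled functions).} Evaluating the formula of Step 1 at the fixed nodes and using $\lagn{\varti{3}}{\vart}{j_2}=\delta_{3j_2}$ and $\lagn{\varzi{2}}{\varz}{j_3}=\delta_{2j_3}$ gives the three families of single-variable functions:
\begin{equation*}
\bphi(\varS)=\sum_{j_1}\bw_{j_1,3,2}\lagn{\varS}{\vars}{j_1},\qquad
\bpsi_{j_1}(\varT)=\sum_{j_2}\bw_{j_1,j_2,2}\lagn{\varT}{\vart}{j_2},\qquad
\bomega_{j_1,j_2}(\varZ)=\sum_{j_3}\bw_{j_1,j_2,j_3}\lagn{\varZ}{\varz}{j_3}.
\end{equation*}
The entries of $\bTheta^\vars$, $\bTheta^\vart$ and $\bTheta^\varz$ are precisely the individual summands of these Lagrange expansions. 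For example, $\bTheta^\vars(\varS)=\pi^\vars\odot\bL^\vars(\varS)$ has entries summing to $\bphi$. This is what justifies the claim that the nine functions $\bphi$, $\bpsi_{1},\bpsi_{2}$, $\bomega_{j_1,j_2}$ are decoupled and determine $\poly$.

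\textbf{Step 3 (the row-wise product).} I will then index the 12 rows of each Kronecker factor lexicographically by $(j_1,j_2,j_3)$:
\begin{itemize}
\item $\bTheta^\vars(\varS)\otimes\bI_6$ repeats entry $j_1$ six times, once for each pair $(j_2,j_3)$;
\item $\bTheta^\vart(\varT)\otimes\bI_2$ repeats entry $(j_1,j_2)$ twice, once for each $j_3$;
\item $\bTheta^\varz(\varZ)\otimes\bI_1$ lists the entries $(j_1,j_2,j_3)$ directly.
\end{itemize}
Row $(j_1,j_2,j_3)$ of the Hadamard product is then the product of three factors. To make this product telescope to $\bw_{j_1,j_2,j_3}$, the normalization must be the one used in the two-variable Lemma. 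Accordingly, I read the entries of $\bTheta^\vart$ as divided by $\bw_{j_1,3,2}$ and those of $\bTheta^\varz$ as divided by $\bw_{j_1,j_2,2}$, assuming these values are nonzero. With this reading,
\begin{equation*}
\bw_{j_1,3,2}\cdot\frac{\bw_{j_1,j_2,2}}{\bw_{j_1,3,2}}\cdot\frac{\bw_{j_1,j_2,j_3}}{\bw_{j_1,j_2,2}}\;\lagn{\varS}{\vars}{j_1}\lagn{\varT}{\vart}{j_2}\lagn{\varZ}{\varz}{j_3}
=\bw_{j_1,j_2,j_3}\lagn{\varS}{\vars}{j_1}\lagn{\varT}{\vart}{j_2}\lagn{\varZ}{\varz}{j_3}.
\end{equation*}
Summing over all 12 rows gives the right-hand side of Step 1, which proves \eqref{eq:kst-lf}.

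\textbf{Main obstacle.} The difficult step is Step 3, and the difficulty is bookkeeping, not analysis. Two things must be checked: that the Kronecker index ordering matches the ordering of the $\pi$-vectors, and that the normalizing denominators cancel as shown. The normalizations are implicit in the definitions of $\bTheta^\vart$ and $\bTheta^\varz$. Without them each row would carry spurious factors such as $\bw_{j_1,3,2}\bw_{j_1,j_2,2}$. The proof must therefore either make the normalization explicit, or absorb it into the definitions as in the two-variable Lemma, together with the nonvanishing assumption on the pivot values.
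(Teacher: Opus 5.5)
Your proposal is correct and follows essentially the paper's argument. The paper's proof is exactly your Step 3: a 12-row table in which the normalized entries $\bw_{j_1,3,2}\lagn{\varS}{\vars}{j_1}$, $\frac{\bw_{j_1,j_2,2}}{\bw_{j_1,3,2}}\lagn{\varT}{\vart}{j_2}$ and $\frac{\bw_{j_1,j_2,j_3}}{\bw_{j_1,j_2,2}}\lagn{\varZ}{\varz}{j_3}$ telescope row by row to the tensor-product Lagrange terms, whose sum is identified with $\poly$. Your Step 1 (uniqueness of tensor-product interpolation) and your explicit remarks on the normalization and the nonvanishing pivots fill in points the paper leaves implicit: the $\bTheta$'s defined before the statement are unnormalized, while the proof's table silently uses the normalized versions.
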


\begin{proof}{%\textrm
The following relations hold:
$$
%\footnotesize
\left[
\begin{array}{ccc}
\bTheta^\vars(\varS) \otimes\bI_6 & \bTheta^\vart(\varT) \otimes\bI_2 & \bTheta^\varz(\varZ) \otimes\bI_1 \\ \hline
\cancel{\bw_{1,3,2}}  & \frac{\cancel{\bw_{1,1,2}}}{\cancel{\bw_{1,3,2}}}\cdot\lagn{\varT}{\vars}{1} & \frac{\bw_{1,1,1}}{\cancel{\bw_{1,1,2}}}\cdot\lagn{\varZ}{\varz}{1} \\
\cancel{\bw_{1,3,2}}\cdot\lagn{\varS}{\vars}{1} & \frac{\bw_{1,1,2}}{\cancel{\bw_{1,3,2}}}\cdot\lagn{\varT}{\vars}{1} & 1\cdot\lagn{\varZ}{\varz}{2} \\
\cancel{\bw_{1,3,2}}\cdot\lagn{\varS}{\vars}{1} & \frac{\cancel{\bw_{1,2,2}}}{\cancel{\bw_{1,3,2}}}\cdot\lagn{\varT}{\vart}{2} & \frac{\bw_{1,2,1}}{\cancel{\bw_{1,2,2}}}\cdot\lagn{\varZ}{\varz}{1} \\
\cancel{\bw_{1,3,2}}\cdot\lagn{\varS}{\vars}{1} & \frac{\bw_{1,2,2}}{\cancel{\bw_{1,3,2}}}\cdot\lagn{\varT}{\vart}{2} & 1\cdot\lagn{\varZ}{\varz}{2} \\
\cancel{\bw_{1,3,2}}\cdot\lagn{\varS}{\vars}{1} & 1\cdot\lagn{\varT}{\vart}{3} & \frac{\bw_{1,3,1}}{\cancel{\bw_{1,3,2}}}\cdot\lagn{\varZ}{\varz}{1} \\
\bw_{1,3,2}\cdot\lagn{\varS}{\vars}{1} & 1\cdot\lagn{\varT}{\vart}{3} & 1\cdot\lagn{\varZ}{\varz}{2} \\ \hline
\cancel{\bw_{2,3,2}}\cdot\lagn{\varS}{\vars}{2} & \frac{\cancel{\bw_{2,1,2}}}{\cancel{\bw_{2,3,2}}}\cdot\lagn{\varT}{\vart}{1} & \frac{\bw_{2,1,1}}{\cancel{\bw_{2,1,2}}}\cdot\lagn{\varZ}{\varz}{1} \\
\cancel{\bw_{2,3,2}}\cdot\lagn{\varS}{\vars}{2} & \frac{\bw_{2,1,2}}{\cancel{\bw_{2,3,2}}}\cdot\lagn{\varT}{\vart}{1} & 1\cdot\lagn{\varZ}{\varz}{2} \\
\cancel{\bw_{2,3,2}}\cdot\lagn{\varS}{\vars}{2} & \frac{\cancel{\bw_{2,2,2}}}{\cancel{\bw_{2,3,2}}}\cdot\lagn{\varT}{\vart}{2} & \frac{\bw_{2,2,1}}{\cancel{\bw_{2,2,2}}}\cdot\lagn{\varZ}{\varz}{1} \\
\cancel{\bw_{2,3,2}}\cdot\lagn{\varS}{\vars}{2} & \frac{\bw_{2,2,2}}{\cancel{\bw_{2,3,2}}}\cdot\lagn{\varT}{\vart}{2} & 1\cdot\lagn{\varZ}{\varz}{2} \\
\cancel{\bw_{2,3,2}}\cdot\lagn{\varS}{\vars}{2} & 1\cdot\lagn{\varT}{\vart}{3} & \frac{\bw_{2,3,1}}{\cancel{\bw_{2,3,2}}}\cdot\lagn{\varZ}{\varz}{1} \\
\bw_{2,3,2}\cdot\lagn{\varS}{\vars}{2} & 1\cdot\lagn{\varT}{\vart}{3} & 1\cdot\lagn{\varZ}{\varz}{2} \\
\end{array}~\right]
=
\underbrace{\left[\begin{array}{c}
\left(\bTheta^\vars(\varS)\otimes\bI_{6}\right)\odot \left(\bTheta^\vart(\varT)\otimes\bI_{2}\right)\odot \left(\bTheta^\varz(\varZ)\otimes\bI_{1}\right)
\\ \hline
\bw_{1,1,1}\cdot \lagn{\varS}{\vars}{1}\,\lagn{\varT}{\vart}{1}\,\lagn{\varZ}{\varz}{1}\\
\bw_{1,1,2}\cdot \lagn{\varS}{\vars}{1}\,\lagn{\varT}{\vart}{1}\,\lagn{\varZ}{\varz}{2}\\\hline
\bw_{1,2,1}\cdot \lagn{\varS}{\vars}{1}\,\lagn{\varT}{\vart}{2}\,\lagn{\varZ}{\varz}{1}\\
\bw_{1,2,2}\cdot\lagn{\varS}{\vars}{1}\,\lagn{\varT}{\vart}{2}\,\lagn{\varZ}{\varz}{2}\\ \hline
\bw_{1,3,1}\cdot \lagn{\varS}{\vars}{1}\,\lagn{\varT}{\vart}{3}\,\lagn{\varZ}{\varz}{1}\\
\bw_{1,3,2}\cdot \lagn{\varS}{\vars}{1}\,\lagn{\varT}{\vart}{3}\,\lagn{\varZ}{\varz}{2}\\ \hline
\bw_{2,1,1}\cdot \lagn{\varS}{\vars}{2}\,\lagn{\varT}{\vart}{1}\,\lagn{\varZ}{\varz}{1}\\
\bw_{2,1,2}\cdot \lagn{\varS}{\vars}{2}\,\lagn{\varT}{\vart}{1}\,\lagn{\varZ}{\varz}{2}\\ \hline
\bw_{2,2,1}\cdot \lagn{\varS}{\vars}{2}\,\lagn{\varT}{\vart}{2}\,\lagn{\varZ}{\varz}{1}\\
\bw_{2,2,2}\cdot \lagn{\varS}{\vars}{2}\,\lagn{\varT}{\vart}{2}\,\lagn{\varZ}{\varz}{2}\\ \hline
\bw_{2,3,1}\cdot \lagn{\varS}{\vars}{2}\,\lagn{\varT}{\vart}{3}\,\lagn{\varZ}{\varz}{1}\\
\bw_{2,3,2}\cdot \lagn{\varS}{\vars}{2}\,\lagn{\varT}{\vart}{3}\,\lagn{\varZ}{\varz}{2}\\
\end{array}\right]}_{\br(\varS,\varT,\varZ)}\\%[-2mm]
$$
It follows that $\poly$ is the sum of the entries of the vector $\br$,  and consequently the interpolation conditions are satisfied, and hence $\poly(\varsi{j_1},\varti{j_2},\varzi{j_3})=\sum\br(\varsi{j_1},\varti{j_2},\varzi{j_3})=\bw_{j_1,j_2,j_3}$. Notice that the first column is not normalized.
\sq}
\end{proof}
We note also that the ordering of the subscripts in $\br$,
follows the ordering imposed by the Kronecker 
product ~$\bS\otimes\bT\otimes\bZ$, where $\bS=\left[\frac{1}{\varS-\varsi{1}}~\cdots~ \frac{1}{\varS-\varsi{\ladimi{1}}}\right]^\top\,\in\,\IC^\ladimi{1}[\varS]$, $\bT=\left[\frac{1}{\varT-\varti{1}}~\cdots~ \frac{1}{\varT-\varti{\ladimi{2}}}\right]^\top\,\in\,\IC^\ladimi{2}[\varT]$ and $\bZ=\left[\frac{1}{\varZ-\varzi{1}}~\cdots~ \frac{1}{\varZ-\varzi{\ladimi{3}}}\right]^\top\,\in\,\IC^\ladimi{3}[\varZ]$.

\begin{remark}
    We note that \eqref{eq:kst-lf} is a sum (over the rows) of products (over the vectors). In order to recover the KST sum of sum formula \eqref{eq:kst-original}, one should simply involve the $\exp(.)$ of the sum of $\log(.)$ of each vector function instead, thus leading to 
    $$
    \poly(\varS,\varT,\varZ)=\sum_{\mbox{row-wise}}\exp\left( \left(\log(\bTheta^\vars(\varS))\otimes\bI_{6}\right)+\left(\log(\bTheta^\vart(\varT))\otimes\bI_{2}\right)+\left(\log(\bTheta^\varz(\varZ))\otimes\bI_{1}\right)\right).
    $$
\end{remark}

%%%%%%%%%%%%%%%%%% CPV STOPPED HERE 

\subsection{Recursive computation of barycentric weights: four-variable polynomial}

Given is a polynomial in four variables: $\varS$, $\varT$, $\varX$, $\varZ$, of degree $\ladimi{i}$, $i=1,\cdots,4$, respectively. The normalized (except for the first variable) Lagrange bases in each variable are spanned by the monomials $\varS-\varsi{j_1}$, $\varT-\varti{j_2}$, $\varX-\varxi{j_3}$ and $\varZ-\varzi{j_4}$, $(j_1,j_2,j_3,j_4)=1,\cdots,(\ladimi{1},\ladimi{2},\ladimi{3},\ladimi{4})$. Similarly to \eqref{eq:basis-lagn-vect}, let us denote the associated vectors by $\bL^\vars(\varS)\in\IC^\ladimi{1}[\varS]$, $\bL^\vart(\varT)\in\IC^\ladimi{2}[\varT]$, $\bL^\varx(\varX)\in\IC^\ladimi{3}[\varX]$, $\bL^\varz(\varZ)\in\IC^\ladimi{4}[\varZ]$. Define the normalized barycentric weights for each variable (notice that each quantity can be independently computed, i.e., in parallel):
$$
%\footnotesize
\overbrace{\left[
\begin{array}{c}
\poly(\varsi{1},\varT,\varX,\varZ) \\
\vdots\\
\poly(\varsi{\ladimi{1}-1},\varT,\varX,\varZ) \\
\poly(\varsi{\ladimi{1}},\varT,\varX,\varZ)
\end{array}\right]}^{\displaystyle\bq_\vars(\varT,\varX,\varZ)}~~
\overbrace{\left[
\begin{array}{c}
\frac{\poly(\varS,\varti{1},\varT,\varZ)}{\poly(\varS,\varti{\ladimi{2}},\varT,\varZ)} \\
\vdots\\
\frac{\poly(\varS,\varti{\ladimi{2}-1},\varT,\varZ)}{\poly(\varS,\varti{\ladimi{2}},\varT,\varZ)}\\
{1}\end{array}\right]}^{\displaystyle\bq_\vart(\varS,\varX,\varZ)}
~~
\overbrace{\left[
\begin{array}{c}
\frac{\poly(\varS,\varT,\varxi{1},\varZ)}{\poly(\varS,\varT,\varxi{\ladimi{3}},\varZ)}\\
\vdots\\
\frac{\poly(\varS,\varT,\varxi{\ladimi{3}-1},\varZ)}{\poly(\varS,\varT,\varxi{\ladimi{3}},\varZ)}\\
{1}\end{array}\right]}^{\displaystyle\bq_\varx(\varS,\varT,\varZ)}~~
\overbrace{\left[
\begin{array}{c}
\frac{\poly(\varS,\varT,\varX,\varzi{1})}{\poly(\varS,\varT,\varX,\varzi{\ladimi{4}})}\\
\vdots\\
\frac{\poly(\varS,\varT,\varX,\varzi{\ladimi{4}-1})}{\poly(\varS,\varT,\varX,\varzi{\ladimi{4}})}\\
{1}\end{array}\right].}^{\displaystyle\bq_\varz(\varS,\varT,\varX)}
$$
From the above data the quantities $\bZ$, $\bX$, $\bT$, $\bS$ are computed recursively, and it holds that: $\displaystyle \sum \bS=\poly(\varS,\varT,\varX,\varZ)$, where
\begin{align*}
\bZ(\varS,\varT,\varX)&:=(\bq_\varz(\varS,\varT,\varX)\odot\bL^\varz(\varZ))\otimes\bI_1\in\IC^{\ladimi{4}}, \\
\bX(\varS,\varT)&:=\left[\begin{array}{c}
\bZ(\varS,\varT,\varxi{1})\\
\vdots\\
\bZ(\varS,\varT,\varxi{\ladimi{3}})
\end{array}\right]\odot (\bq_\varx(\varS,\varT,z_{\nu_4})\odot\bL^\varx(\varX))\otimes\bI_{\ladimi{3}}
\in\IC^{\ladimi{3}\ladimi{4}},\\
\bT(\varS)&:=\left[\begin{array}{c}
\bX(\varS,\varti{1})\\
\vdots\\
\bX(\varS,\varti{\ladimi{2}})
\end{array}\right]\odot (\bq_\vart(\varS,\varxi{\ladimi{3}},\varzi{\ladimi{4}})\odot\bL^\vart(\varT))\otimes\bI_{\ladimi{3}\ladimi{4}}\in\IC^{\ladimi{2}\ladimi{3}\ladimi{4}}, \\
\bS&:=\left[\begin{array}{c}
\bT(\varsi{1})\\
\vdots\\
\bT(\varsi{\ladimi{1}})
\end{array}\right]\odot (\bq_\vars(\varti{\ladimi{2}},\varxi{\ladimi{3}},\varzi{\ladimi{4}})\odot\bL^\bs)\otimes\bI_{\ladimi{2}\ladimi{3}\ladimi{4}}\in\IC^{\ladimi{1}\ladimi{2}\ladimi{3}\ladimi{4}}.
\end{align*}

\section{The P\'olya-Szeg\"o example}\label{sec:polya-szego}

Given is the three-variable polynomial: $\poly({\varX},{\varY},{\varZ})=\varX \varY+\varX\varZ+\varY\varZ$, previously mentioned in problem no. 119 from \cite{polya}. In this case, we note that $\ladimi{1}=\ladimi{2}=\ladimi{3}=2$, and hence $\ladimi{1}\ladimi{2}\ladimi{3}=8$. 

Let the interpolation points be: $(\varxi{1},\varxi{2})$, $(\varyi{1},\varyi{2})$, $(\varzi{1},\varzi{2})$, respectively. The $1+\ladimi{1}+\ladimi{1}\ladimi{2}=7$, decoupled, single-variable  functions are:
$\phi(\varX)=\poly(\varX,\varyi{2},\varzi{2})$, 
$\psi_1(\varY)=\poly(\varxi{1},\varY,\varzi{2})$, 
$\psi_2(\varY)=\poly(\varxi{2},\varY,\varzi{2})$, 
$\omega_1(\varZ)=\poly(\varxi{1},\varyi{1},\varZ)$, 
$\omega_2(\varZ)=\poly(\varxi{2},\varyi{1},\varZ)$, 
$\omega_3(\varZ)=\poly(\varxi{1},\varyi{2},\varZ)$, 
$\omega_4(\varZ)=\poly(\varxi{2},\varyi{2},\varZ)$.  Putting these together as described above,  yields the following three vector-valued, single-variable functions. The decoupled polynomial vector functions are:
\begin{align*}
\bPhi({\bx})&=
\begin{bmatrix} 
\frac{\varX-\varxi{2}}{\varxi{1}-\varxi{2}} 
\phi(\varxi{1}) & 
\frac{\varX-\varxi{1}}{\varxi{2}-\varxi{1}} \phi(\varxi{2}) 
\end{bmatrix}^\top\in \IC^2[\bx], \\
\bPsi(\varY) &=
\begin{bmatrix}
\frac{\varY-\varyi{2}}{\varyi{1}-\varyi{2}} 
\frac{\psi_1(\varyi{1})}{\psi_1(\varyi{2})}&
\frac{{\varY}-\varyi{1}}{\varyi{2}-\varyi{1}}& 
\frac{{\varY}-\varyi{2}}{\varyi{1}-\varyi{2}} \frac{\psi_2(\varyi{1})}{\psi_2(\varyi{2})} &
\frac{\varY-\varyi{1}}{\varyi{2}-\varyi{1}} 
\end{bmatrix}^\top\in \IC^4[{\by}], \\ %[1mm]
\bOmega(\varZ) &=
\begin{bmatrix} 
\frac{\varZ-\varzi{2}}{\varzi{1}-\varzi{2}} 
\frac{\omega_1(\varzi{1})}{\omega_1(\varzi{2})} &
\frac{\varZ-\varzi{1}}{\varzi{2}-\varzi{1}} &
\frac{\varZ-\varzi{2}}{\varzi{1}-\varzi{2}} 
\frac{\omega_3(\varzi{1})}{\omega_3(\varzi{2})} & 
\frac{\varZ-\varzi{1}}{\varzi{2}-\varzi{1}}  &
\frac{\varZ-\varzi{1}}{\varzi{1}-\varzi{2}} 
\frac{\omega_2(\varzi{1})}{\omega_2(\varzi{2})} &
\frac{\varZ-\varzi{1}}{\varzi{2}-\varzi{1}} &
\frac{\varZ-\varzi{1}}{\varzi{1}-\varzi{2}} 
\frac{\omega_4(\varzi{1})}{\omega_4(\varzi{2})} &
\frac{\varZ-\varzi{1}}{\varzi{2}-\varzi{1}} 
\end{bmatrix}^\top,
\end{align*}
\normalsize
with $\bOmega(\varZ) \in\IC^8[\varZ]$ and $\widehat\bPhi(\varX)=\bPhi(\varX)\otimes\bI_4$, $\widehat{\bPsi}(\varY) =\bPsi(\varY) \otimes \bI_2$ and $\widehat{\bOmega}({\bz})=\bOmega({\bz})\otimes\bI_1$.

Thus, the original function is decoupled precisely by means of these vector functions, i.e.,
$$
\poly(\varX,\varY,\varZ)= \sum_{i=1}^7 \widehat\bPhi_i(\varX) \odot \widehat\bPsi_i(\varY) \odot \widehat\bOmega_i(\varZ).
$$

\begin{remark}
    On this specific example, we note that each vector function is composed of simple functions of the type $f(x)= ax + b$, where $a,b\in\IC$. Indeed, by considering the first element of $\bPhi$, we get $f(\varX)=a \varX + b$, where  $\frac{\varX-\varxi{2}}{\varxi{1}-\varxi{2}} \phi(\varxi{1})$ leads to $a=\frac{\phi(\varxi{1})}{\varxi{1}-\varxi{2}}$ and $b=\frac{\varxi{2}}{\varxi{1}-\varxi{2}}$. More generally, for a polynomial of degree $\sysord$, it follows that $f$ is also of degree $\sysord$.
\end{remark}

\section{Variable decoupling: the general case}

For the three variables rational case, introducing the general notations to the variables $\var{1}, \var{2}, \var{3}$, we have:
$$
\left.\begin{array}{rrcl}
\bPhi_3(\var{3})=&\baryden{\var{3}} &\odot&\bI_{\ladimi{1}}\otimes \bI_{\ladimi{2}}\otimes \bX_3\\
\bPhi_2(\var{2})=& \baryden{\var{2}}\otimes\bI_{\ladimi{3}}&\odot&\bI_{\ladimi{1}}\otimes\bX_2\otimes\bI_{\ladimi{3}}\\
\bPhi_1(\var{1})=& \baryden{\var{1}}\otimes\bI_{\ladimi{2}}\otimes\bI_{\ladimi{3}}&\odot& \bX_1\otimes\bI_{\ladimi{2}}\otimes\bI_{\ladimi{3}}\\
\end{array}\right\},
$$
$$
\left.\begin{array}{rrcl}
\widehat\bPhi_3(\var{3})=&\barynum{\var{3}} &\odot&\bI_{\ladimi{1}}\otimes \bI_{\ladimi{2}}\otimes \bX_3\\
\widehat\bPhi_2(\var{2})=& \barynum{\var{2}}\otimes\bI_{\ladimi{3}}&\odot&\bI_{\ladimi{1}}\otimes\bX_2\otimes\bI_{\ladimi{3}}\\
\widehat\bPhi_1(\var{1})=& \barynum{\var{1}}\otimes\bI_{\ladimi{2}}\otimes\bI_{\ladimi{3}}&\odot& \bX_1\otimes\bI_{\ladimi{2}}\otimes\bI_{\ladimi{3}}\\
\end{array}\right\}
$$
Similarly, for the general $\ord$ variable case, with $\bI_{\ladimi{i}\cdots\ladimi{i+\ell}}=\bI_{\ladimi{i}}\otimes\cdots\otimes\bI_{\ladimi{i+\ell}}$, and $N=\ladimi{1}\cdots\ladimi{\ord}$, we have ($l=1,\cdots,\ord$):
\begin{equation}\label{eq:general}
\begin{array}{rcl}
\bPhi_l(\var{l})&=& \left(\baryden{\var{l}}\otimes\bI_{\ladimi{l+1}\cdots \ladimi{\ord}}\right)\odot \left(\bI_{\ladimi{1}\cdots\ladimi{l-1}}\otimes\bX_l\otimes\bI_{\ladimi{l+1}\cdots\ladimi{n}}\right)
~~\mbox{and}\\%[1mm]
\widehat \bPhi_l(\var{l})&=& \left(\barynum{\var{l}}\otimes\bI_{\ladimi{l+1}\cdots \ladimi{\ord}}\right)\odot \left(\bI_{\ladimi{1}\cdots\ladimi{l-1}}\otimes\bX_l\otimes\bI_{\ladimi{l+1}\cdots\ladimi{n}}\right)
\end{array}
\end{equation}
where $\barynum{\var{l}},\baryden{\var{l}}\in\IC^{\ladimi{1}\cdots\ladimi{\ord}}$ are composed  of $\ladimi{1}\cdots\ladimi{l-1}$ normalized barycentric weight vectors (interpolation values)  of size $\ladimi{l}$, which  are associated with single-variable functions with right interpolation points $\lani{l}{j_l}$ with $l=1,\cdots,\ord$,  $j_l=1,\cdots,\ladimi{l}$. Finally, $\bX_l$ is the vector of normalized Lagrange polynomials in the variable $\var{l}$. By combining the expressions for the numerator and denominator, the next formula is derived:
\begin{equation}\label{mainformula}%\small
%\begin{array}{l}
\bH(\var{1},\cdots,\var{\ord})=
\frac{\displaystyle \sum_{\textrm{row-wise}}\widehat\bPhi_1(\var{1})
\odot\cdots\odot\widehat\bPhi_\ord(\var{\ord})}
{\displaystyle \sum_{\textrm{row-wise}}\bPhi_1(\var{1})
\odot\cdots\odot\bPhi_\ord(\var{\ord})}.
%~=~\frac{\displaystyle\sum_{k=1}^{\nu} ~ \mathstrut^1{\widehat\bPhi}_k(\mathstrut^1\bx) \cdots {\mathstrut^n\widehat\bPhi}_k(\mathstrut^n\bx)}{\displaystyle \sum_{k=1}^{\nu}~ \mathstrut^1\bPhi_k(\mathstrut^1\bx) \cdots \mathstrut^n\bPhi_k(\mathstrut^n\bx)} 
%\end{array}
\end{equation}

\begin{enumerate}
\item[\textbf(a)] We have thus shown that given a multivariate rational  function ~$\bH(\var{1},\cdots,\var{\ord})$, of degree $\ladimi{l}-1$ in the variable $\var{l}$, we can  write down the  decoupling (KST) functions in each  variable by inspection, no computation is required.%\\[-3mm]
\item[\textbf (b)] This involves two important components: 
(i) expressing the null space of Loewner matrices associated with rational functions by inspection, and (ii) decoupling of the variables of multivariate rational functions by evaluation.  The proof involves putting together the barycentric Lagrange weights for all single-variable functions and combining them using Kronecker and Hadamard products.%\\[-3mm]
\item[\textbf(c)] Approximate decoupling can be used in the case of approximation of non-rational multivariate functions. For details, we refer the reader to \cite{PVGVA}.% \sq
\end{enumerate}
\normalsize

\section{Conclusions}

In this note, we have addressed the issue of variable decoupling for rational multivariate functions, as well as numerator and denominator decoupling. This result complements the Kolmogorov Superposition Theorem, which addresses the issue for arbitrary continuous functions. Making use of the results presented \cite{AGPV25} and the connections with the Loewner framework, we show that the decoupling can be achieved by means of simple evaluations. Basically, no computation is required. In addition, the number of decoupled functions depends on the degrees of each variable.

\vspace{1mm}

\bibliographystyle{plainurl}
\bibliography{Ref_decoup}

\appendix
%%%%%%%%%%%%%%

\section{The P\'olya-Szeg\"o example (cont'd)}

We continue the example presented in the introduction and detailed in Section \ref{sec:polya-szego}. Here, we discuss two numerical code implementations aiming at discovering a multivariate P\'olya-Szeg\"o function based on data only. In Section \ref{ssec:pykan}, we present a Python code using the \texttt{pyKAN} package provided in \cite{kan}\footnote{Available at: \url{https://github.com/KindXiaoming/pykan}}, and in Section \ref{ssec:mlf}, we present a Matlab code using the \texttt{mLF} provided in \cite{AGPV25}\footnote{Available at: \url{https://github.com/cpoussot/mLF}}.

\subsection{The \texttt{pyKAN} approach}\label{ssec:pykan}

We first use the \texttt{pyKAN} Python package by \cite{kan}. The code is given below.

\begin{lstlisting}[language=Python,caption=Python code for the P\'olya-Szeg\"o example using \texttt{pyKAN}.]
### Use pyKAN (https://kindxiaoming.github.io/pykan/index.html)
from kan import *
import numpy as np

### Function (Polya-Szego)
H = lambda x: x[:,0]*x[:,1] + x[:,0]*x[:,2] + x[:,1]*x[:,2]

### Create data-set 
dataset = create_dataset(H, n_var=3, train_num=1000)

### Create a KAN: 
# width of KAN (3 inputs, 7 hidden neurons, 1 output) 
# grid intervals (grid=5)
# cubic spline (k=3)
model = KAN(width=[3,7,1], grid=5, k=3, seed=0)

### Plot KAN at initialization
model(dataset['train_input']);
model.plot(beta=100)

### Train
model.fit(dataset, opt="LBFGS", steps=1000);
model = model.prune()
model.plot()
print(model)

### Symbolic
lib = ['x','x^2','x^3']
model.auto_symbolic(lib=lib)
model_sym = model.symbolic_formula()[0][0]
print(model_sym)
\end{lstlisting}

The above code first creates the function (line 6) and the data set (line 9). Here we chose 1000 sample points for the $\ord=3$ variables problem. Then the KAN architecture is chosen, with $2\ord+1$ hidden neurons, being the one indicated by the Kolmogorov result (line 15). Then we initialize, train, prune, and plot the network (lines 18-25). The KAN graph is shown in Figure \ref{fig:pykan_polya_szego}. Here, we chose the LBFGS optimization method, which led to the best results. Finally, the symbolic form of the surrogate model is computed (lines 28-31), leading to the  following symbolic formula: \\ 
\footnotesize
\begin{align*}
\footnotesize
&\texttt{model\_sym}(x_1,x_2,x_3) \footnotesize
 = 0.0850653022913471 x_1 + 0.0825853690442805 x_2 - 0.248315405767798 x_3 \\ &- 0.812635481357574 (0.69964097700502 x_1 - 0.119794319364831 x_3 - 0.0301766892079272)^2 \\ &- 0.409290462732315 (-0.985099053714597 x_2 + 0.1801977654841 x_3 + 0.0402133109601216)^2 \\ & - 0.465758383274078 (0.205582950890196 x_1 + 0.195172294630299 x_2 - 0.926481644338002 x_3 + 0.410213824838751)^2 \\ &+ 0.0399022586643696 (0.79880678544853 x_1 + 0.737652184852142 x_2 + 0.686408054766781 x_3 - 0.505935898189065)^2 \\ &+ 0.618472099304199 (0.810037995234768 x_1 + 0.813577309716954 x_2 + 0.614917247924268 x_3 + 0.0117037574483545)^2 \\ &+ 0.121694707448264.
\end{align*}
\normalsize

\begin{figure}[h]
    \centering
    \includegraphics[width=0.7\linewidth]{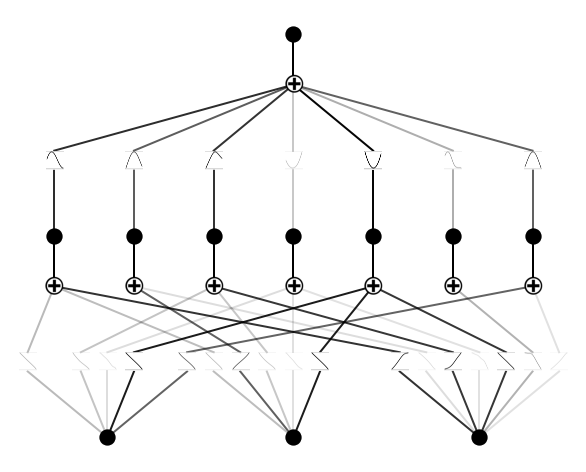}
    \caption{KAN model for the P\'olya-Szeg\"o function obtained with \texttt{pyKAN}.}
    \label{fig:pykan_polya_szego}
\end{figure}

Comparing the obtained model with the original one using 10,000 random draws leads to an average absolute error close to $4\cdot 10^{-2}$.

\begin{remark}
Following the strategy in Sections 2.5.1 and 2.5.2 in the original KAN paper \cite{kan}, one could potentially transform the computed KAN above into a more interpretable, sparser version that may be the same, or closer to the original. But this has mostly to do with fine-tuning the parameters, manual pruning, etc. We humbly admit that we are not experts in the use of \texttt{pyKAN}; therefore, refinements of our code would possibly improve the results reported here.
\end{remark}

\begin{remark}
The above code presents a single sample, but different options were tested: more specifically, (i) at line 22, when using the \texttt{model.fit} function, \texttt{lamb} has been tested, a higher number of \texttt{step}, the ADAM algorithm (instead of LBFGS); (ii) at line 9, different KAN structures were tested, including \texttt{width=[3,8,1]}, \texttt{width=[3,7,7,1]}, \texttt{width=[3,5,1]}; (iii) we also include experiments with and without pruning. The configuration reported above contains the best setup.
\end{remark}

\subsection{The \texttt{mLF} approach}\label{ssec:mlf}

Next, the \texttt{mLF} Matlab package by \cite{AGPV25} is used to solve the same problem. The code is given below.

\begin{lstlisting}[language=Matlab,caption=Matlab code for the P\'olya-Szeg\"o example using \texttt{mLF}.]
%%% Use mLF (https://github.com/cpoussot/mLF)
addpath('LOCATION_OF_mlf_PACKAGE') % put your own path

%%% Function (Polya-Szego)
H   = @(x)x(:,1).*x(:,2)+x(:,1).*x(:,3)+x(:,2).*x(:,3);

%%% Create data-set (interpolation points & tensor  construction)
p_c         = {1:3 1:3 1:3};
p_r         = {-2:0 -2:0 -2:0};
[y,x,dim]   = mlf.make_tab_vec(H,p_c,p_r);
T           = mlf.vec2mat(y,dim); 

%%% mLF [A/G/P-V, 2025, alg. 1]
opt.ord_tol     = 1e-8;   % normalized tolerance of the 1-variable SVD 
opt.method_null = 'svd0'; % method for null-space computation 
opt.method      = 'rec';  % use recursive method
[r,iloe]        = mlf.alg1(T,p_c,p_r,opt); % "r" is the approximate handle function

%%% KST symbolic view
[Bary,Lag,~] = mlf.decoupling(iloe.pc,iloe.lag);
num          = simplify(sum(iloe.w.*Bary{1}.*Lag{1}.*Bary{2}.*Lag{2}.*Bary{3}.*Lag{3}));
den          = simplify(sum(Bary{1}.*Lag{1}.*Bary{2}.*Lag{2}.*Bary{3}.*Lag{3}));
simplify(num/den)
\end{lstlisting}

The above code first creates the function (line 5) and the data set (lines 8-11). Here we chose 8 sample points only for the $\ord=3$ variables problem (this is discussed next). Then the multivariate Lagrangian model is computed (lines 14-17). Finally, the symbolic form of the surrogate model is computed (lines 20-23), leading to the  following symbolic formula: 
$$
\left.\begin{array}{rcl}
\texttt{num}(s_1,s_2,s_3)&=& \frac{8\,\left(s_{1}\,s_{2}+s_{1}\,s_{3}+s_{2}\,s_{3}\right)}{\left({s_{1}}^2-4\,s_{1}+3\right)\,\left({s_{2}}^2-4\,s_{2}+3\right)\,\left({s_{3}}^2-4\,s_{3}+3\right)} \\
\texttt{den}(s_1,s_2,s_3)&=& \frac{8}{\left({s_{1}}^2-4\,s_{1}+3\right)\,\left({s_{2}}^2-4\,s_{2}+3\right)\,\left({s_{3}}^2-4\,s_{3}+3\right)}
\end{array} \right\} \Rightarrow \frac{\texttt{num}(s_1,s_2,s_3)}{\texttt{den}(s_1,s_2,s_3)} =     s_{1}\,s_{2}+s_{1}\,s_{3}+s_{2}\,s_{3},
$$
which exactly recovers the function. More details on the different steps and Lagrangian graph are available in \cite[\S 5.48]{PVGVA}.

\begin{remark}
    First, we would like to stress that the exact function is actually recovered. Unlike the previous case, here, only $8$ points are needed to recover the function. We also note that choosing more points does not affect the result. Notice also that the complexity of the rational (and polynomial) form is automatically detected by the algorithm (see \cite{AGPV25,PVGVA} for details).    
\end{remark}

%%%%%%%%%%%%%
\section{Recursive procedure: a four-variable polynomial example}

Given is the polynomial $\poly(\varS,\varT,\varX,\varZ) = \varX^2+\varS \varX \varZ+\varT \varZ^2+1$, and thus $\ladimi{1}=\ladimi{2}=2$, $\ladimi{3}=\ladimi{4}=3$. Following \eqref{eq:basis-lagn-vect}, the normalized Lagrange bases for each variable are: 
\begin{equation*}
\bL^\vars(\varS) =
\left[\begin{array}{c} \frac{\varS-\varsi{2}}{\varsi{1}-\varsi{2}}\\[1mm] \frac{\varS-\varsi{1}}{\varsi{2}-\varsi{1}} \end{array}\right],~
\bL^\vart(\varT) =
\left[\begin{array}{c} \frac{\varT-\varti{2}}{\varti{1}-\varti{2}}\\[1mm] \frac{\varT-\varti{1}}{\varti{2}-\varti{1}} \end{array}\right],~
\bL^\varx(\varX) =
\left[\begin{array}{c} 
\frac{\left(\varX-\varxi{2}\right) \left(\varX-\varxi{3}\right)}{\left(\varxi{1}-\varxi{2}\right) \left(\varxi{1}-\varxi{3}\right)}\\[1mm] 
\frac{\left(\varX-\varxi{1}\right) \left(\varX-\varxi{3}\right)}{\left(\varxi{2}-\varxi{1}\right) \left(\varxi{2}-\varxi{3}\right)}\\[1mm] \frac{\left(\varX-\varxi{1}\right) \left(\varX-\varxi{2}\right)}{\left(\varxi{1}-\varxi{3}\right) \left(\varxi{2}-\varxi{3}\right)} \end{array}\right],~
\bL^\varz(\varZ) =
\left[\begin{array}{c} \frac{\left(\varZ-\varzi{2}\right) \left(\varZ-\varzi{3}\right)}{\left(\varzi{2}-\varzi{2}\right) \left(\varzi{2}-\varzi{3}\right)}\\[1mm] \frac{\left(\varZ-\varzi{1}\right) \left(\varZ-\varzi{3}\right)}{\left(\varzi{2}-\varzi{1}\right) \left(\varzi{2}-\varzi{3}\right)}\\[1mm] \frac{\left(\varZ-\varzi{1}\right) \left(\varZ-\varzi{2}\right)}{\left(\varzi{1}-\varzi{3}\right) \left(\varzi{2}-\varzi{3}\right)} \end{array}\right].
\end{equation*}
Then, following Proposition \ref{prop:decoupling_three}, the barycentric weights are (notice the first variable is not normalized):
$$
\bq_\vars(\varT,\varX,\varZ)=
\left[\begin{array}{c}
\poly(\varsi{1} ,\varT,\varX,\varZ)\\[1mm]
\poly(\varsi{2} ,\varT,\varX,\varZ)
\end{array}\right] = 
\left[\begin{array}{c} \varX^2+\varsi{1} \varX \varZ+\varT \varZ^2+1\\[1mm] 
\varX^2+\varsi{2} \varX \varZ+\varT \varZ^2+1\end{array}\right],
$$
$$
\bq_\vart(\varS,\varX,\varZ)=
\left[\begin{array}{c} 
\frac{\poly(\varS,\varti{1},\varX,\varZ)}{\poly(\varS,\varti{2},\varX,\varZ)}\\[1mm]
1 \end{array}\right]=
\left[\begin{array}{c} 
\frac{\varX^2+\varS \varX \varZ+\varti{1} \varZ^2+1}{\varX^2+\varS \varX \varZ+\varti{2} \varZ^2+1}\\[1mm]
1 \end{array}\right],
$$
$$
\bq_\varx(\varS,\varT,\varZ)=
\left[\begin{array}{c} 
\frac{\poly(\varS,\varT,\varxi{1},\varZ)}{\poly(\varS,\varT,\varxi{3},\varZ)}\\[1mm] 
\frac{\poly(\varS,\varT,\varxi{2},\varZ)}{\poly(\varS,\varT,\varxi{3},\varZ)}\\[1mm] 
1 \end{array}\right],~
\bq_\varz(\varS,\varT,\varX)=
\left[\begin{array}{c} 
\frac{\poly(\varS,\varT,\varX,\varzi{1})}{\poly(\varS,\varT,\varX,\varzi{3})}\\[1mm] 
\frac{\poly(\varS,\varT,\varX,\varzi{2})}{\poly(\varS,\varT,\varX,\varzi{3})}\\[1mm] 
1 \end{array}\right].
$$
Then, the procedure is as follows:
$$
\begin{array}{lrcccll}
\mbox{step}~1:&\bZ&=& & & \left(\bq_\varz(\varS,\varT,\varX) \odot \bL_\varz(\varZ) \right) & \in\IC^{\ladimi{4}}[\varS,\varT,\varX,\varZ]\\[1mm]
\mbox{step}~2:&\bX&=&\left[\begin{array}{l}
\bZ\mid_{\varX=\varxi{1}}\\[1mm] 
\bZ\mid_{\varX=\varxi{2}}\\[1mm] 
\bZ\mid_{\varX=\varxi{3}}
\end{array}\right] & \odot & \left(\bq_\varx(\varS,\varT,\varzi{3}) \odot \bL_\varx(\varX)\right)\otimes\bI_{\ladimi{4}} & \in\IC^{\ladimi{3}\ladimi{4}}[\varS,\varT,\varX,\varZ]\\[1mm]
\mbox{step}~3:&\bT&=&\left[\begin{array}{l}
\bX\mid_{\bt=\varti{1}}\\[1mm] \bX\mid_{\bt=\varti{2}}\end{array}\right] & \odot & \left( \bq_\vart(\varS,\varxi{3},\varzi{3}) \odot \bL_{\vart}(\varT)\right)\otimes \bI_{\ladimi{3}\ladimi{4}}
&\in\IC^{\ladimi{2}\ladimi{3}\ladimi{4}}[\varS,\varT,\varX,\varZ]\\[1mm]
\mbox{step}~4:&\bS&=&\left[ \begin{array}{l}
\bT\mid_{\bs=s_1}\\[1mm] 
\bT\mid_{\bs=s_2}
\end{array}\right] & \odot & \left(\bq_\vars(\varti{2},  \varxi{3},\varzi{3}) \odot  \bL_{\vars}(\varS)\right) \otimes \bI_{\ladimi{2}\ladimi{3}\ladimi{4}} & \in\IC^{\ladimi{1}\ladimi{2}\ladimi{3}\ladimi{4}}[\varS,\varT,\varX,\varZ]
\end{array}
$$
Which leads to,
$$
\Rightarrow\left\{\begin{array}{lll}
\sum \bZ=\frac{\numrm^z}{\denrm^z}& 
\numrm^z =\varX^2 + \varS \varX \varZ + \varT \varZ^2 + 1,& 
\denrm^z =\varX^2 + \varS \varX \varzi{3} + \varT \varzi{3}^2 + 1,\\[1mm]
\sum \bX=\frac{\numrm^x}{\denrm^x}& 
\numrm^x =\varX^2 + \varS \varX \varZ + \varT \varZ^2 + 1,& 
\denrm^x =\varxi{3}^2 + \varS \varxi{3} \varzi{3} + \varT \varzi{3}^2 + 1,\\[1mm]
\sum \bT=\frac{\numrm^t}{\denrm^t}& 
\numrm^t =\varX^2 + \varS \varX \varZ + \varT \varZ^2 + 1,& 
\denrm^t =\varxi{3}^2 + \varS \varxi{3} \varzi{3} + \varti{2} \varzi{3}^2 + 1,\\[1mm]
\sum \bS=\frac{\numrm^s}{\denrm^s}& 
\numrm^s =\varX^2 + \varS \varX \varZ + \varT \varZ^2 + 1,& 
\denrm^s =1~~
\Rightarrow~~\fbox{$\displaystyle \sum \bS=\numrm^s=\bp(\bs,\bt,\bx,\bz)\,$}.\\
\end{array}\right.
$$
Thus, summing over the last step leads to the result.

%%%%%%%%%%%%%%%%%%%%%%%%%%%%%%%%%%%%%%%%%%%%%%%%%%
\section{Recursive procedure: a two-variable rational example}

%\section{Example: two-variable rational function decoupling} 

Consider the 2-variable rational function  $\bH(\varS, \varT)=\frac{\num(\varS, \varT)}{\den(\varS, \varT)}$  with degrees one in $\varS$ and two in $\varT$, i.e. $\ladimi{1}=2$, $\ladimi{2}=3$. Let the right  interpolation points be $\varsi{1},\varsi{2}$ and $\varti{3}$.  The resulting (right) interpolation pairs are $(\varsi{j_1},\varti{j_2})$, $j_1=1,2$, and $j_2=1,2,3$. In other words, the interpolation grid, (composed of six interpolation pairs of points $(\varsi{j_1},\varti{j_2})$) enters into consideration. Variable decoupling is the consequence of simple evaluations of the multivariate function $\bH(\varS,\varT)$  on the  above Kronecker grid, namely:
$$
\begin{array}{rcl}
\text{variable $\varS$ (one single-variable function)}&:&
\bH(\varS,\varti{3}):=\bphi(\varS)\\
\text{variable $\varT$ (two single-variable functions)}&:&
\bH(\varsi{j_1},\varT):=\bpsi_i(\varT) \text{ (for $j_1=1,2$)}, 
\end{array}
$$

Reconstruction of $\bH(\varS,\varT)$ is achieved from the three single-variable functions above. Here are the details (we also refer to the first two lines of \eqref{NL3}, which define the normalized Lagrange basis vectors).

\subsection{The normalized barycentric coefficients and Lagrange basis of the denominator}

According to Theorem \ref{theo1}, we can write down the barycentric coefficients  of the denominators of the two functions in the variable $\varT$ as follows (notice the normalization with respect to the last entry):
$$
\baryden{\varti{1}}=\left[\begin{array}{c}
\frac{\den(\varsi{1},\varti{1})}{\den(\varsi{1},\varti{3})}\\
\frac{\den(\varsi{1},\varti{2})}{\den(\varsi{1},\varti{3})}\\
1\\
\end{array}\right]\in\IC^{3\times 1}, ~~~
\baryden{\varti{2}}=\left[\begin{array}{c}
\frac{\den(\varsi{2},\varti{1})}{\den(\varsi{2},\varti{3})}\\
\frac{\den(\varsi{2},\varti{2})}{\den(\varsi{2},\varti{3})}\\
1\\
\end{array}\right]\in\IC^{3\times 1}~\Rightarrow~~
\baryden{\varT}=\left[\begin{array}{c}
\baryden{\varti{1}}\\
\baryden{\varti{2}}\\
\end{array}\right]\in\IC^{6\times 1}.
$$
Then, for the variable $\varS$, the normalized barycentric weight vector is (notice that for the first variable, no normalization is applied):
$$
\baryden{\varS}=
\left[\begin{array}{c}
\den(\varsi{1},\varti{3})\\
\den(\varsi{1},\varti{3})
\end{array}\right].
$$
Thus, we define two single-variable vector functions, one is $\varS$ and one in $\varT$ as:
$$
\bPsi(\varT)=\baryden{\varT}\odot\left[\bI_2\otimes\bL^\vart(\varT)\right]~~\mbox{and}~~
\bPhi(\varS)=\left(\baryden{\varS}\odot\bL^\vars(\varS)\right)\otimes\bI_{3},
$$ 
being the two decoupled single-variable (vector) 
functions of size $N=6$  associated the denominator of $\bH(\varS,\varT)$.%: ${\bf Den}=\left[\begin{array}{c|c} \bPsi(t)~&~\bPhi(s) \end{array}\right]$.

\subsection{The normalized barycentric coefficients and Lagrange basis of the numerator}

Similarly, we obtain the single-variable functions for the numerator:
$$
\barynum{\varti{1}}=\left[\begin{array}{c}
\frac{\num(\varsi{1},\varti{1})}{\num(\varsi{1},\varti{3})}\\
\frac{\num(\varsi{1},\varti{2})}{\num(\varsi{1},\varti{3})}\\
1\\
\end{array}\right]\in\IC^{3\times 1}, ~
\barynum{\varti{2}}=\left[\begin{array}{c}
\frac{\num(\varsi{2},\varti{1})}{\num(\varsi{2},\varti{3})}\\
\frac{\num(\varsi{2},\varti{2})}{\num(\varsi{2},\varti{3})}\\
1\\
\end{array}\right]\in\IC^{3\times 1} \Rightarrow
\barynum{\varT}=\left[\begin{array}{c}
\barynum{\varti{1}}\\
\barynum{\varti{2}}\\
\end{array}\right]\in\IC^{6\times 1}.
$$
and
$$
\barynum{\varS}=
\left[\begin{array}{c}
\num(\varsi{1},\varti{3})\\
\num(\varsi{1},\varti{3})\end{array}\right].
$$
Thus, we define two single-variable vector functions, one is $\varS$ and one in $\varT$ as:
$$
\widehat{\bPsi}(\varT)=\barynum{\varT}\odot\left[\bI_2\otimes\bL^\vart(\varT)\right]~~\mbox{and}~~
\widehat{\bPhi}(\varS)=\left(\barynum{\varS}\odot\bL^\vars(\varS)\right)\otimes\bI_{3},
$$ 
being the two decoupled single-variable (vector) functions of size $N=6$ associated to the numerator of $\bH(\varS,\varT)$. %are:\\[1mm] ~${\bf Num}=\left[\begin{array}{c|c} \widehat\bPsi(t)~&~\widehat\bPhi(s) \end{array}\right]$.~

\subsection{Reconstruction of the rational function}

Combining the expressions for the numerator and denominator implies that the original function can be reconstructed as follows:
$$ %\label{2Dcase}
\bH(\varS,\varT)=\frac{\sum_{\text{row-wise}}\widehat{\bPsi}(\varT)\odot\widehat{\bPhi}(\varS)}
{\sum_{\text{row-wise}} \bPsi(\varT)\odot\bPhi(\varS)} = 
\frac{\sum_{k=1}^{\nu} 
\widehat{\bPsi}_k(\varT) \widehat{\bPhi}_k(\varS)}
{\sum_{k=1}^{\nu} \bPsi_k(\varT) \bPhi_k(\varS)}.
$$

\subsection{Detailed barycentric representation and example}

We now provide a detailed barycentric representation of the numerator and denominator of $\bH(\varS,\varT)$ exhibiting the decoupling of the variables: 
$$
%\begin{turn}{90}{\!\!\!\!\!\!\mbox{\bf NUM (s,t)}}\end{turn}
\overbrace{\left[\begin{array}{c}
\frac{\num(\varsi{1},\varti{1})}{\num(\varsi{1},\varti{3})}\cdot \lagn{\varT}{\vart}{1} \\
\frac{\num(\varsi{1},\varti{2})}{\num(\varsi{1},\varti{3})}\cdot \lagn{\varT}{\vart}{2}\\
\varti{3}\\\hline
\frac{\num(\varsi{2},\varti{1})}{\num(\varsi{2},\varti{3})}\cdot \lagn{\varT}{\vart}{1}\\
\frac{\num(\varsi{2},\varti{2})}{\num(\varsi{2},\varti{3})}\cdot \lagn{\varT}{\vart}{2}\\
\varti{3}\\
\end{array}\right]}^{\widehat\bPsi(\varT)}
%%%%%%%
\odot
\overbrace{\left[ \begin{array}{c}
\num(\varsi{1},\varti{3})\cdot \lagn{\varS}{\vars}{1} \\~~\star~~\\~~\star~~\\
\hline
\num(\varsi{2},\varti{3})\cdot \lagn{\varS}{\vars}{2} \\~~\star~~\\~~\star~~\\\end{array} \right]}^{\widehat\bPhi(\varS)}=  
%%%%%%%%
\left[\begin{array}{r}
\num(\varsi{1},\varti{1})\cdot \lagn{\varS}{\vars}{1} \lagn{\varT}{\vart}{1} \\
\num(\varsi{1},\varti{2})\cdot\lagn{\varS}{\vars}{1} \lagn{\varT}{\vart}{2}\\
\num(\varsi{1},\varti{3})\cdot\lagn{\varS}{\vars}{1} \lagn{\varT}{\vart}{3}\\
\hline
\num(\varsi{2},\varti{1})\cdot\lagn{\varS}{\vars}{2} \lagn{\varT}{\vart}{1}\\
\num(\varsi{2},\varti{2})\cdot\lagn{\varS}{\vars}{2} \lagn{\varT}{\vart}{2}\\
\num(\varsi{2},\varti{3})\cdot\lagn{\varS}{\vars}{2} \lagn{\varT}{\vart}{3}\\
\end{array}\right]
$$
and
$$
%\begin{turn}{90}{\!\!\!\!\!\!\mbox{\bf NUM (s,t)}}\end{turn}
\overbrace{\left[\begin{array}{c}
\frac{ \den(\varsi{1},\varti{1})}{ \den(\varsi{1},\varti{3})}\cdot \lagn{\varT}{\vart}{1} \\
\frac{ \den(\varsi{1},\varti{2})}{ \den(\varsi{1},\varti{3})}\cdot \lagn{\varT}{\vart}{2}\\
\varti{3}\\\hline
\frac{ \den(\varsi{2},\varti{1})}{ \den(\varsi{2},\varti{3})}\cdot \lagn{\varT}{\vart}{1}\\
\frac{ \den(\varsi{2},\varti{2})}{ \den(\varsi{2},\varti{3})}\cdot \lagn{\varT}{\vart}{2}\\
\varti{3}\\
\end{array}\right]}^{\bPsi(\varT)}
%%%%%%%
\odot
\overbrace{\left[ \begin{array}{c}
\den(\varsi{1},\varti{3})\cdot \lagn{\varS}{\vars}{1} \\~~\star~~\\~~\star~~\\
\hline
\den(\varsi{2},\varti{3})\cdot \lagn{\varS}{\vars}{2} \\~~\star~~\\~~\star~~\\\end{array} \right]}^{\bPhi(\varS)}=  
%%%%%%%%
\left[\begin{array}{r}
\den(\varsi{1},\varti{1})\cdot \lagn{\varS}{\vars}{1} \lagn{\varT}{\vart}{1} \\
\den(\varsi{1},\varti{2})\cdot\lagn{\varS}{\vars}{1} \lagn{\varT}{\vart}{2}\\
\den(\varsi{1},\varti{3})\cdot\lagn{\varS}{\vars}{1} \lagn{\varT}{\vart}{3}\\
\hline
\den(\varsi{2},\varti{1})\cdot\lagn{\varS}{\vars}{2} \lagn{\varT}{\vart}{1}\\
\den(\varsi{2},\varti{2})\cdot\lagn{\varS}{\vars}{2} \lagn{\varT}{\vart}{2}\\
\den(\varsi{2},\varti{3})\cdot\lagn{\varS}{\vars}{2} \lagn{\varT}{\vart}{3}\\
\end{array}\right].
$$

%%%%%%%%%%%%%%%%%%%%%%%
Let us now consider the example: $\bH(\varS,\varT)=\frac{\varT^2  +\varS-2}{\varT^2+2\varS+1}$, together with the  interpolation points: $(\varsi{1},\varsi{2})$ and $(\varti{1},\varti{2},\varti{3})$. This example leads to:
$$
\begin{array}{c}
\bPhi(\varS)\odot \bPsi(\varT) \\ \hline
( t_1^2 +2 s_1 + 1)\lagn{\varS}{\vars}{1} \lagn{\varT}{\vart}{1}\\
( t_2^2+2s_1+1)\lagn{\varS}{\vars}{1} \lagn{\varT}{\vart}{2}\\
( t_3^2 + 2s_1 + 1)\lagn{\varS}{\vars}{1} \lagn{\varT}{\vart}{3}\\
( t_1^2+2s_2 + 1)\lagn{\varS}{\vars}{2} \lagn{\varT}{\vart}{1}\\
( t_2^2+2s_2+ 1)\lagn{\varS}{\vars}{2} \lagn{\varT}{\vart}{2}\\
( t_3^2 + 2s_2 + 1)\lagn{\varS}{\vars}{2} \lagn{\varT}{\vart}{3}\\
\end{array}~ \text{ and }
\begin{array}{c}
\widehat\bPhi(\varS)\odot \widehat\bPsi(\varT) \\ \hline
(  t_1^2 + s_1 - 2)\lagn{\varS}{\vars}{1} \lagn{\varT}{\vart}{1}\\
(  t_2^2 + s_1 - 2)\lagn{\varS}{\vars}{1} \lagn{\varT}{\vart}{2}\\
(  t_3^2 + s_1 - 2)\lagn{\varS}{\vars}{1} \lagn{\varT}{\vart}{3}\\
(  t_1^2 + s_2 - 2)\lagn{\varS}{\vars}{2} \lagn{\varT}{\vart}{1}\\
(  t_2^2 + s_2 - 2)\lagn{\varS}{\vars}{2} \lagn{\varT}{\vart}{2}\\
(  t_3^2 + s_2 - 2)\lagn{\varS}{\vars}{2} \lagn{\varT}{\vart}{3}\\
\end{array}.
$$
By summing along the rows,
$$
\left\{\begin{array}{l}
\hat\num(\varS,\varT)=\sum\widehat\bPhi(\varS)\odot \widehat\bPsi(\varT) = ( \varT^2 + \varS - 2)\\
\hat\den(\varS,\varT) = \sum\bPhi(\varS)\odot\bPsi(\varT)  = ( \varT^2 + 2\varS + 1)\\
\end{array} 
\right.
$$
Then, 
$$
\frac{\hat\num(\varS,\varT)}{\hat\den(\varS,\varT)} =
\frac{\varT^2  +\varS-2}{\varT^2+2\varS+1}=\bH(\varS,\varT).
$$

\end{document}